\theoremstyle{plain}
\newtheorem{theorem}{Theorem}[section]
\newtheorem{proposition}[theorem]{Proposition}
\newtheorem{notation}[theorem]{Notation}
\newtheorem{lemma}[theorem]{Lemma}
\theoremstyle{definition}
\newtheorem{remark}[theorem]{Remark}
\newtheorem{definition}[theorem]{Definition}
\newtheorem{corollary}[theorem]{Corollary}
\newcommand{\N}{\mathbb{N}}
\newcommand{\norm}[1]{\left\lVert#1\right\rVert}
\begin{document}

\title{Quantitative results on the multi-parameters Proximal Point Algorithm \thanks{2010 Mathematics Subject Classification: 47H09, 47N10,  03F10, 46S30. Keywords: Maximal monotone operator, proximal point algorithm, metastability, asymptotic regularity, proof mining.}}

\author{Bruno Dinis${}^{a}$ and Pedro Pinto${}^{b}$\\[2mm]
	\footnotesize ${}^{a}$ Departamento de Matem\'atica, Faculdade de Ci\^encias da
	Universidade de Lisboa,\\ 
	\footnotesize Campo Grande, Edif\'icio~C6, 1749-016~Lisboa, Portugal\\
	\footnotesize E-mail:  \protect\url{bmdinis@fc.ul.pt}\\[ 2mm]
	\footnotesize ${}^{b}$ Department of Mathematics, Technische Universit{\"a}t Darmstadt,\\ 
	\footnotesize Schlossgartenstrasse 7, 64289 Darmstadt, Germany \\
	\footnotesize E-mail:  \protect\url{pinto@mathematik.tu-darmstadt.de}
}
\maketitle

\begin{abstract}
We give a quantitative analysis of a theorem due to Fenghui Wang and Huanhuan Cui concerning the convergence of a multi-parametric version of the proximal point algorithm. Wang and Cui's result ensures the convergence of the algorithm to a zero of the operator. Our quantitative analysis provides explicit bounds on the metastability (in the sense of Terence Tao) for the convergence and the asymptotic regularity of the iteration. Moreover, our analysis bypasses the need of sequential weak compactness and only requires a weak form of the metric projection argument.
\end{abstract}

\section{Introduction}

In this paper we give a quantitative analysis of a theorem due to Fenghui Wang and Huanhuan Cui concerning the strong convergence of a multi-parametric version of the proximal point algorithm in Hilbert spaces. 

The \emph{proximal point algorithm} $(\mathsf{PPA})$ is recognized as a powerful and successful tool in approximating a zero of a maximal monotone operator in a Hilbert space. The algorithm was studied by Ralph Rockafellar in \cite{Rockafellar76}, where  weak convergence for $(\mathsf{PPA})$ was established.  A counter-example by  Osman G\"{u}ler in \cite{G(91)} showed that, in general, one cannot guarantee strong convergence for this iteration. This has prompted a series of variants in an attempt to obtain strong convergence.  Motivated by the success of the \emph{Halpern iterations} in fixed point theory \cite{Halpern67}, the \emph{Halpern-type proximal point algorithm} ($\mathsf{HPPA}$) was introduced by Shoji Kamimura and Wataru Takahashi in \cite{KT(00)} and, independently, by Hong-Kun Xu in \cite{X(02)}. With given points $u, z_0$, a regularization sequence  of positive real numbers $(c_n)$ and a sequence of errors $(e_n)$, ($\mathsf{HPPA}$) is recursively defined by
\begin{equation}\label{HPPA} \tag{\textsf{HPPA}}
z_{n+1}=\lambda_n u+(1-\lambda_n)J_{c_n}(z_n) +e_n,
\end{equation}
where $J_{c_n}$ is the resolvent function associated with $c_n$ and the maximal monotone operator. Strong convergence for \eqref{HPPA} was shown e.g.\ in  \cite[Theorem~2]{BM(11)} and \cite[Theorem~5.1]{X(02)}. These two strong convergence results received quantitative analyses in \cite{LLPP(ta)} and \cite{PP(ta)}, respectively. A generalization to Banach spaces was discussed in \cite{AM(17)}. This generalization received a quantitative analysis by Ulrich Kohlenbach in \cite{Koh(ta)}.

Yonghong Yao and Muhammad Aslam Noor studied in \cite{YaoNoor2008} a generalization of $\eqref{HPPA}$, in an attempt to obtain a result of strong convergence, in Hilbert spaces, under weaker assumptions. This generalization involves the use of several parameters and is called the \emph{multi-parameters proximal point algorithm} $\eqref{PPA}$. This was partially achieved in \cite[Theorem~3.3]{YaoNoor2008}, however a new condition was necessary that prevented the reduction to \eqref{HPPA}. A metastable version of this result was given in \cite{DP(ta)}. 

In this paper we give a quantitative analysis of a strong convergence result for \eqref{PPA}  by Wang and Cui \cite[Theorem~1]{WangCui2012}. Wang and Cui's result can be viewed as a generalization of previous results (see e.g.\ \cite{KT(00), X(02), MX(04), YaoNoor2008, BM(11)1}). Indeed, it relies on weaker conditions and enables a reduction to \eqref{HPPA}. The output of our analysis consists of explicit bounds on metastability properties (in the sense of Terence Tao \cite{T(08a),T(08b)}). Namely, we obtain functionals $\rho$ and $\widetilde{\rho}$ such that for every natural number $k$ and function $f:\N \to \N$ the following  properties hold
\begin{equation}\label{rho1}
\exists n \leq \rho(k,f) \, \forall i,j \in [n,f(n)] \left(\norm{z_i-z_j}\leq \frac{1}{k+1} \right),
\end{equation}
\begin{equation}\label{rho2}
\exists n \leq \widetilde{\rho}(k,f) \, \forall i \in [n,f(n)] \left(\norm{J_{c_i}(z_i)-z_i}\leq \frac{1}{k+1} \right),
\end{equation}
with the sequence $(z_n)$  defined by \eqref{PPA}. The propeties  \eqref{rho1} and \eqref{rho2} are the \emph{metastable} versions of the Cauchy property  and the asymptotic regularity for $(z_n)$, respectively. Notice that the original properties and their metastable versions are, in fact, (ineffectively) equivalent. While in general one cannot guarantee rate extraction for such properties, the underlying theoretical techniques ensure that we are always able to extract information for the corresponding metastable versions. For a discussion on the history and relevance of the notion of metastability we refer to \cite{K(18)}.

In this analysis, and similarly to previous analyses (cf. \cite{DP(ta),LLPP(ta),FFLLPP(19),PP(ta)}), we were guided by Fernando Ferreira and Paulo Oliva's \emph{bounded functional interpretation} \cite{FO(05)}, more specifically the classical variant from \cite{F(09)}. Functional interpretations are helpful to navigate the original proof, to avoid certain non-essential principles used therein (such as sequential weak compactness) and to obtain explicit bounds. The use of functional interpretations to analyse mathematical proofs has been very successful, particularly in areas such as approximation theory, ergodic theory, fixed point theory and optimization theory. We refer to \cite{K(17)} and the book \cite{K(08)} for an overview of such results. We would like to point out that, even though a proof-theoretical technique underlines the analysis in this paper, no knowledge of Mathematical Logic is required for the understanding of its results.

We work only with a weaker version of the metric projection principle where the projection point, crucial in the original proof, is replaced by approximations. Also, we bypass the sequential weak compactness arguments used in the original proof. The way to deal with the projection argument and sequential weak compactness is explained in full detail in \cite{FFLLPP(19)} (these qualitative improvements first appeared in \cite{K(11)}). Furthermore, the original proof has a discussion by cases which, in our quantitive analysis, imposes a discussion by cases for each approximation to the projection point. Namely, for each natural number $k$ and function $f$, one must consider a ``good enough'' approximation to the projection point and carry out the discussion by cases relative to that point. 

The structure of the paper is the following. In Section~\ref{sectionPrelim} we recall the relevant terminology as well as some well-known results from the theory of monotone operators in Hilbert spaces. We also recall some results necessary for our analysis. Also, in Subsection~\ref{sectionoriginalproof} we give a detailed description  of the original proof by Wang and Cui in order to clarify the different steps that our analysis requires. The main analysis is carried out in Section~\ref{Sectionanalysis}. As in the original proof we divide it into two cases depending on whether a certain auxiliary sequence is eventually decreasing or not. Some final remarks are left  to Section~\ref{sectionremarks}.

\section{Preliminaries}\label{sectionPrelim}

\subsection{Background on Monotone Operators on Hilbert spaces}
Throughout we let $H$ be a real Hilbert space with inner product $\langle \cdot, \cdot \rangle$ and norm $\norm{\cdot}$. We recall that an operator $\mathsf{A}:H \to 2^{H}$  is said to be \emph{monotone} if and only if whenever $(x,y)$ and $(x',y')$ are elements of the graph of $\mathsf{A}$, it holds that $\langle x-x',y-y'\rangle \geq 0$. A monotone operator $\mathsf{A}$ is said to be \emph{maximal monotone} if  the graph of $\mathsf{A}$ is not properly contained in the graph of any other monotone operator on $H$. We define $S:=\mathsf{A}^{-1}(0)$, the set of all \emph{zeros} of $\mathsf{A}$. For a comprehensive introduction to convex analysis and the theory of monotone operators in Hilbert spaces we refer to \cite{BC(17)}.

We fix $\mathsf{A}$ a maximal monotone operator and assume henceforth $S$ to be nonempty.
For every positive real number $\sigma$, we use $J_\sigma$  to denote the \emph{resolvent function} of $\mathsf{A}$, i.e.\ the single-valued function defined by $J_\sigma = (I + \sigma\mathsf{A} )^{-1}.$

\begin{definition}
A mapping $T : H \to H$ is called \emph{nonexpansive} if 
$$\forall x, y \in H \left(\norm{T (x) -T (y)}\leq \norm{x -y}\right),$$ 
and \emph{firmly nonexpansive} if
$$\forall x, y \in H\left( \norm{T(x)-T(y)}^2 \leq \norm{x-y}^2-\norm{(Id -T)(x)- (Id-T)(y)}^2 \right).$$
\end{definition}
Note that if $T$ is firmly nonexpansive then it is nonexpansive. The set $\{x \in H: T(x)=x\}$ of fixed points of the mapping $T$ will be denoted by  $\mathrm{Fix}(T)$.  If $T$ is nonexpansive, then $\mathrm{Fix}(T)$ is a closed and convex subset of $H$. 
 For $\sigma>0$, the resolvent function $J_\sigma$ is firmly nonexpansive and the set of fixed points of $J_\sigma$ is $S$.

Consider the following multi-parametric version of the proximal point algorithm introduced in \cite{YaoNoor2008},

\begin{equation}\label{PPA}\tag{\textsf{mPPA}}
z_{n+1}=\lambda_nu+\gamma_nz_n+\delta_nJ_{c_n}(z_n)+e_n, 
\end{equation}
where $u,z_0 \in H$ are given, $(c_n) \subset (0,+\infty) $, $(\lambda_n),(\delta_n) \subset (0,1)$ and $(\gamma_n) \subseteq [0,1)$ such that$\lambda_n+\gamma_n+\delta_n=1$, for all $n \in \N$.

Sometimes it is useful to consider the following \emph{exact} version of the algorithm \eqref{PPA}, 	
\begin{equation}\label{exactPPA}\tag{\textsf{mPPA}$_{\mathsf{e}}$}
y_{n+1}=\lambda_nu+\gamma_ny_n+\delta_nJ_{c_n}(y_n),
\end{equation}
where $u,y_0 \in H$ are given, $(c_n) \subset (0,+\infty)$, $(\lambda_n),(\delta_n) \subset (0,1)$ and $(\gamma_n) \subseteq [0,1)$ such that$\lambda_n+\gamma_n+\delta_n=1$, for all $n \in \N$.

Since we will only look at \eqref{exactPPA} as a way to prove strong convergence for \eqref{PPA}, we will always assume that $y_0=z_0$.

The following lemmas are well-known.

\begin{lemma}[Resolvent identity]
For $a,b>0$, the identity
\begin{equation*}
J_a (x) = J_b\left(\frac{b}{a}x+ \left(1-\frac{b}{a} \right)J_a (x)\right),
\end{equation*}
holds for every $x \in H$.
\end{lemma}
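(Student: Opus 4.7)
The plan is to unpack the definitional content $J_\sigma = (I + \sigma \mathsf{A})^{-1}$ and show by direct verification that both sides describe the same point. Let $y := J_a(x)$. By definition of the resolvent, $y$ is the unique point with $x \in (I + a\mathsf{A})(y)$, i.e.\ such that $\frac{x-y}{a} \in \mathsf{A}y$.

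Now set $w := \frac{b}{a}x + \bigl(1 - \frac{b}{a}\bigr) J_a(x) = \frac{b}{a}x + \bigl(1 - \frac{b}{a}\bigr) y$; the goal is to verify that $y = J_b(w)$, i.e.\ that $\frac{w - y}{b} \in \mathsf{A}y$. A short algebraic computation gives
\begin{equation*}
\frac{w-y}{b} \;=\; \frac{1}{b}\left(\frac{b}{a}x + \Bigl(1-\frac{b}{a}\Bigr)y - y\right) \;=\; \frac{1}{b}\cdot\frac{b}{a}(x-y) \;=\; \frac{x-y}{a},
\end{equation*}
which is precisely the element of $\mathsf{A}y$ identified above. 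Hence $w \in (I+b\mathsf{A})(y)$, and since $\mathsf{A}$ is maximal monotone the resolvent $J_b$ is single-valued, so $y = J_b(w)$, proving the identity.

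There is essentially no obstacle beyond this bookkeeping: the argument is a one-line rearrangement once one parameterises in terms of $y = J_a(x)$ and translates each resolvent application into the corresponding inclusion in $\mathsf{A}y$. The only subtlety worth flagging is that the proof implicitly uses maximality (to guarantee $J_b$ is well-defined and single-valued on the relevant point), which is already part of the standing assumption on $\mathsf{A}$.
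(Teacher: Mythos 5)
Your proof is correct: parameterising by $y = J_a(x)$, translating both resolvent applications into the inclusion $\frac{x-y}{a} \in \mathsf{A}y$, and checking the one-line algebraic identity $\frac{w-y}{b} = \frac{x-y}{a}$ is exactly the standard argument for the resolvent identity. The paper states this lemma as well-known and gives no proof, so there is nothing to compare against; your verification (including the remark that maximality guarantees $J_b$ is single-valued) fills that gap correctly.
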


\begin{lemma}[\cite{MX(04)}]\label{lemmaresolvineq}
If $0<a\leq b$, then $\norm{J_a (x)-x} \leq 2 \norm{J_b (x)-x}$, for all $x \in H$.
\end{lemma}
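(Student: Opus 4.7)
The plan is to apply the resolvent identity recalled just above, combining it with the nonexpansiveness of $J_a$ and a single triangle inequality. Since $a \leq b$ so that $a/b \in (0,1]$, the resolvent identity gives the symmetric form
\[
J_b(x) = J_a\!\left(\frac{a}{b}x + \left(1 - \frac{a}{b}\right) J_b(x)\right),
\]
which I would obtain by the exact same reasoning as the stated identity but swapping the roles of $a$ and $b$ (equivalently, verify that the right-hand side $w$ satisfies $(w - J_b(x))/a \in \mathsf{A}(J_b(x))$, using that $(x - J_b(x))/b \in \mathsf{A}(J_b(x))$).

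Next I would apply $J_a$ to both sides being the same point, and use that $J_a$ is firmly nonexpansive (hence nonexpansive) to estimate
\[
\norm{J_b(x) - J_a(x)} \leq \norm{\tfrac{a}{b}x + (1 - \tfrac{a}{b})J_b(x) - x} = \left(1 - \tfrac{a}{b}\right)\norm{J_b(x) - x}.
\]

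Finally, the triangle inequality gives
\[
\norm{J_a(x) - x} \leq \norm{J_a(x) - J_b(x)} + \norm{J_b(x) - x} \leq \left(2 - \tfrac{a}{b}\right)\norm{J_b(x) - x} \leq 2\,\norm{J_b(x) - x},
\]
where the last step just throws away the favourable term $a/b > 0$. The only subtle point is choosing the right orientation of the resolvent identity: if one writes it the other way, expressing $J_a$ in terms of $J_b$, one obtains a factor $(b/a - 1)$, and the triangle inequality then only closes when $b \leq 2a$. So the main (small) obstacle is simply picking the version of the identity that makes the coefficient $1 - a/b \leq 1$ rather than $b/a - 1$, which is unbounded. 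No other ingredients are needed beyond firm nonexpansiveness of the resolvent, already recorded in the preliminaries.
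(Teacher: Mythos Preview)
Your argument is correct and is exactly the standard proof of this inequality: apply the resolvent identity in the orientation $J_b(x)=J_a\!\left(\tfrac{a}{b}x+(1-\tfrac{a}{b})J_b(x)\right)$, use nonexpansiveness of $J_a$, and then the triangle inequality to get the factor $2-\tfrac{a}{b}\leq 2$. Your remark about the ``wrong'' orientation producing the unbounded coefficient $\tfrac{b}{a}-1$ is also on point.

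Note, however, that the paper does not prove this lemma at all: it is quoted from \cite{MX(04)} and stated without proof. So there is no comparison to make beyond observing that what you wrote is precisely the short argument one finds in that reference.
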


\begin{lemma}\label{Lemmabasic}
Let $x, y \in H$ and let $t, s \geq 0$. Then
\begin{enumerate}
\item $\norm{x+y}^2\leq \norm{x}^2+2 \langle y,x+y \rangle$;
\item $\norm{tx+sy}^2=t(t+s)\norm{x}^2+s(t+s)\norm{y}^2-st\norm{x-y}^2$.
\end{enumerate}
\end{lemma}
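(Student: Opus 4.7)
The plan is to prove both items by direct expansion using the defining identity $\|v\|^2=\langle v,v\rangle$ together with bilinearity and symmetry of the inner product. Neither statement requires any geometric insight beyond the polarization identity $\|x+y\|^2=\|x\|^2+2\langle x,y\rangle+\|y\|^2$, so the whole argument is a routine calculation; the only ``obstacle'' is keeping the bookkeeping tidy.

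For item (1), I would rewrite the right-hand side by expanding the inner product in the second term:
\[
\|x\|^2+2\langle y,x+y\rangle=\|x\|^2+2\langle y,x\rangle+2\|y\|^2,
\]
and then expand the left-hand side:
\[
\|x+y\|^2=\|x\|^2+2\langle x,y\rangle+\|y\|^2.
\]
Subtracting, the claimed inequality reduces to $0\le\|y\|^2$, which is immediate. This also clarifies that the inequality is actually an equality plus the non-negative slack $\|y\|^2$; it is a purely algebraic consequence of positivity of the norm.

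For item (2), I would again expand both sides. The left-hand side gives
\[
\|tx+sy\|^2=t^2\|x\|^2+2ts\langle x,y\rangle+s^2\|y\|^2.
\]
On the right-hand side, distributing yields
\[
t(t+s)\|x\|^2+s(t+s)\|y\|^2-st\bigl(\|x\|^2-2\langle x,y\rangle+\|y\|^2\bigr),
\]
and after collecting the coefficients of $\|x\|^2$, $\|y\|^2$, and $\langle x,y\rangle$ (the cross terms $ts\|x\|^2-st\|x\|^2$ and $st\|y\|^2-st\|y\|^2$ cancel), one obtains exactly $t^2\|x\|^2+s^2\|y\|^2+2st\langle x,y\rangle$, matching the left-hand side. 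The hypotheses $t,s\ge 0$ are in fact not used in the algebra; they are stated because the identity is typically applied to convex combinations later in the paper.

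Thus both identities follow from a single application of the polarization identity, and no deeper tool is needed. The main thing to watch is that in item (1) one should not be tempted to try a Cauchy--Schwarz argument, which would give a weaker (and less useful) form; the clean ``$2\langle y,x+y\rangle$'' structure on the right-hand side is precisely what makes the inequality tight up to $\|y\|^2$, and this is exactly the form typically exploited in the later analysis of \eqref{PPA}.
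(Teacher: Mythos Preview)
Your proof is correct; both items are handled by exactly the standard expansion you describe. The paper itself does not give a proof of this lemma (it is listed among the ``well-known'' preliminary lemmas), so there is nothing to compare against.
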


We will use the following result due to Xu.

\begin{lemma}[\cite{X(02)}]\label{LemmaXu}
Let $(\alpha_n) \subset (0,1)$ and $(b_n)$ be real sequences such that
\begin{enumerate}[$(i)$]
\item $\sum \alpha_n = \infty$.
\item $\lim \alpha_n =0$.
\item $\limsup b_n \leq 0$ or $\sum \alpha_n |b_n|<\infty$.
\end{enumerate}
 Let $(a_n)$ be a nonnegative real sequence satisfying $a_{n+1}\leq (1-\alpha_n)a_n+ \alpha_nb_n$. Then $\lim a_n = 0$.
\end{lemma}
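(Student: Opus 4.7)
The plan is to handle the two alternatives of condition $(iii)$ separately, since they require somewhat different bookkeeping. In both cases the crucial ingredient is that conditions $(i)$ and $(ii)$ together force $\prod_{k=N}^{n}(1-\alpha_k) \to 0$ as $n\to\infty$ for every fixed $N$. This follows from the elementary bound $1-x\leq e^{-x}$, which yields
\[
\prod_{k=N}^{n}(1-\alpha_k) \leq \exp\left(-\sum_{k=N}^n \alpha_k\right) \longrightarrow 0.
\]

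\emph{First alternative:} assume $\limsup b_n \leq 0$. Fix $\varepsilon>0$ and choose $N$ with $b_n\leq \varepsilon$ for all $n\geq N$. I would then verify by induction on $n\geq N$ the estimate
\[
a_{n+1} \leq \left(\prod_{k=N}^{n}(1-\alpha_k)\right) a_N + \varepsilon\left(1 - \prod_{k=N}^{n}(1-\alpha_k)\right),
\]
using at each step the inequality $a_{n+1}\leq(1-\alpha_n)a_n+\alpha_n\varepsilon$. Letting $n\to\infty$, the product tends to $0$, so $\limsup a_n \leq \varepsilon$. Since $\varepsilon$ was arbitrary and $a_n\geq 0$, we conclude $\lim a_n = 0$.

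\emph{Second alternative:} assume $\sum \alpha_n|b_n|<\infty$. Setting $c_n := \alpha_n|b_n|$ and using $b_n\leq |b_n|$, the recurrence yields $a_{n+1}\leq (1-\alpha_n)a_n + c_n$. Iterating from $0$ gives, again by induction,
\[
a_{n+1} \leq \left(\prod_{k=0}^n (1-\alpha_k)\right) a_0 + \sum_{j=0}^n c_j\prod_{k=j+1}^n(1-\alpha_k).
\]
Given $\varepsilon>0$, choose $M$ with $\sum_{j\geq M} c_j < \varepsilon$ and split the sum at $M$. The tail $\sum_{j=M}^n c_j\prod_{k=j+1}^n(1-\alpha_k)$ is bounded by $\sum_{j\geq M}c_j < \varepsilon$, while the head is dominated by $\left(\sum_{j<M}c_j\right)\prod_{k=M}^n(1-\alpha_k)$, which tends to $0$ with $n$. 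Combined with the vanishing of the initial term, this yields $\limsup a_n \leq \varepsilon$, and again $\lim a_n = 0$.

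The main obstacle I expect is the bookkeeping in the second alternative: the threshold $M$ depends on $\varepsilon$, and one must carefully verify that truncating the weighted sum at $M$ leaves a head whose coefficients $\prod_{k=j+1}^n(1-\alpha_k)$ can be uniformly bounded by $\prod_{k=M}^n(1-\alpha_k)$, which still tends to $0$ thanks to $(i)$. The first alternative is essentially a weighted averaging argument: the iterated inequality exhibits $a_{n+1}$ as a convex combination of the initial value $a_N$ and the eventual bound $\varepsilon$, with the weight on $a_N$ vanishing by divergence of $\sum \alpha_n$.
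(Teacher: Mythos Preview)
The paper does not supply its own proof of this lemma: it is quoted as a known result from Xu's paper \cite{X(02)} and used as a black box (in step~1 and step~4.i.f of the sketch in Subsection~\ref{sectionoriginalproof}). So there is no proof in the paper to compare against.

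Your argument is correct. In both alternatives you iterate the recursion and exploit that $\prod_{k\geq N}(1-\alpha_k)=0$, which follows from $(i)$ via $1-x\leq e^{-x}$. The first alternative is the standard convex-combination argument; the second is the standard tail-splitting argument for a summable perturbation, and your bookkeeping is fine: for $j<M$ one has $\prod_{k=j+1}^n(1-\alpha_k)\leq \prod_{k=M}^n(1-\alpha_k)$ since the extra factors lie in $(0,1]$. One remark: you never invoke hypothesis $(ii)$, and indeed it is not needed for the conclusion; this redundancy is well known for Xu's lemma. The paper itself tacitly acknowledges this, since its quantitative substitute (Lemma~\ref{LemmaLLPP} and Lemma~\ref{lemmaqtXu1}) requires only a rate of divergence for $\sum\alpha_n$ and no rate of convergence for $\alpha_n\to 0$.
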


In this paper we carry out a quantative analysis of Theorem~\ref{ThmWangCui} below, due to Wang and Cui, which relies on the following conditions
\begin{enumerate}[($C_1$)]
\item\label{C1} $\lim \lambda_n=0$,
\item\label{C2} $\sum_{n=0}^{\infty} \lambda_n=\infty$,
\item\label{C3} $\liminf c_n>0$,
\item\label{C4} $\liminf\delta_n>0$,
\item\label{C5} $\sum_{n=1}^{\infty}\norm{e_n}<\infty$ or $\lim\frac{\norm{e_n}}{\lambda_n}=0$.
\end{enumerate}

\begin{theorem}{\rm (\cite[Theorem 1]{WangCui2012})}\label{ThmWangCui}
Let $(z_n)$ be generated by \eqref{PPA}. Assume that conditions $(C_{\ref{C1}})-(C_{\ref{C5}})$ hold. 
Then $(z_n)$ converges strongly to a point $z \in S$  (the nearest point projection of $u$ onto $S$).
\end{theorem}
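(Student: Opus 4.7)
The plan is to first reduce the inexact iteration \eqref{PPA} to its exact companion \eqref{exactPPA}, and then to prove $y_n \to z := P_S(u)$. Since $y_0 = z_0$ and $J_{c_n}$ is nonexpansive, one obtains $\norm{z_{n+1}-y_{n+1}} \leq (1-\lambda_n)\norm{z_n-y_n} + \norm{e_n}$, and $(C_{\ref{C5}})$ then yields $\norm{z_n-y_n} \to 0$ (directly via telescoping in the summable case, and via Lemma~\ref{LemmaXu} in the ratio case). Boundedness of $(y_n)$, and hence of $(z_n)$, is immediate: for any $p \in S$, nonexpansiveness of $J_{c_n}$ together with $J_{c_n}(p)=p$ gives $\norm{y_{n+1}-p} \leq \lambda_n\norm{u-p} + (1-\lambda_n)\norm{y_n-p}$, so $(y_n)$ is bounded by $\max\{\norm{u-p},\norm{y_0-p}\}$.

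The core step is asymptotic regularity, $\norm{J_{c_n}(y_n)-y_n} \to 0$. Applying Lemma~\ref{Lemmabasic}(2) to $v_n := \gamma_n(y_n-z) + \delta_n(J_{c_n}(y_n)-z)$ and using firm nonexpansiveness of $J_{c_n}$ with $J_{c_n}(z)=z$, then Lemma~\ref{Lemmabasic}(1) applied to $y_{n+1}-z = \lambda_n(u-z) + v_n$, one arrives at a recursion of the shape
\[
\norm{y_{n+1}-z}^2 \leq (1-\lambda_n)^2\norm{y_n-z}^2 \; - \; \delta_n(1-\lambda_n+\gamma_n)\norm{J_{c_n}(y_n)-y_n}^2 \; + \; 2\lambda_n\langle u-z,\, y_{n+1}-z\rangle.
\]
Writing $s_n := \norm{y_n-z}^2$, I would follow the Maing\'e two-case split: if $(s_n)$ is eventually nonincreasing it converges, so the subtractive term must vanish and, combined with $(C_{\ref{C4}})$ and $\lim \lambda_n = 0$, $\norm{J_{c_n}(y_n)-y_n} \to 0$; otherwise one passes to Maing\'e's auxiliary subsequence of indices where $s_{n+1}>s_n$, derives the same conclusion along it, and propagates it to the full sequence. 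This is exactly the dichotomy the authors flag as surviving in the quantitative analysis.

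With asymptotic regularity secured, the resolvent identity together with Lemma~\ref{lemmaresolvineq} and $(C_{\ref{C3}})$ transfers the conclusion to a single fixed resolvent $J_c$, after which the standard weak-compactness plus demiclosedness argument yields $\limsup_n\langle u-z,\, y_n-z\rangle \leq 0$: pick a subsequence realizing the limsup, extract a weakly convergent sub-subsequence with weak limit $\hat z$, use demiclosedness of $I-J_c$ at $0$ to conclude $\hat z \in \mathrm{Fix}(J_c) = S$, and invoke the defining property of $z = P_S(u)$. Combining this with Lemma~\ref{Lemmabasic}(1) produces an inequality of the form $s_{n+1} \leq (1-\lambda_n)s_n + \lambda_n b_n$ with $\limsup b_n \leq 0$, and Lemma~\ref{LemmaXu} with $\alpha_n = \lambda_n$ (using $(C_{\ref{C1}})$ and $(C_{\ref{C2}})$) gives $s_n \to 0$; hence $y_n \to z$ and, by the first paragraph, $z_n \to z$.

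The main obstacle is asymptotic regularity in the non-monotone case, handled by Maing\'e's indexing trick and responsible for the case split inherited by the subsequent quantitative treatment. A secondary subtlety, inessential at the qualitative level but central once one seeks explicit bounds, is the reliance on weak sequential compactness and on the exact projection point $z$; as the introduction announces, both will be replaced in the quantitative analysis by finitary substitutes and by approximations to the projection point.
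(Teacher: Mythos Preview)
Your outline matches the paper closely in the decreasing case, but the non-monotone case is not handled correctly. You write that along Maing\'e's subsequence $\tau(n)$ one ``derives the same conclusion along it, and propagates it to the full sequence'', and then you continue as though full asymptotic regularity $\norm{J_{c_n}(y_n)-y_n}\to 0$ is available, applying the weak-compactness/demiclosedness argument and Lemma~\ref{LemmaXu} once for the whole sequence. Under $(C_{\ref{C1}})$--$(C_{\ref{C5}})$ there is no mechanism to propagate asymptotic regularity from the subsequence to the full sequence: the key inequality $\delta_n^2\norm{J_{c_n}(y_n)-y_n}^2 \le M\lambda_n + s_n - s_{n+1}$ forces the resolvent term small only when $(s_n)$ converges (case~1) or when $s_n - s_{n+1}\le 0$, which is precisely the Maing\'e index $n=\tau(n)$.

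The paper therefore does \emph{not} return to Lemma~\ref{LemmaXu} in case~2. It stays on the subsequence throughout: from $s_{\tau(n)}\le s_{\tau(n)+1}$ and the recursion one reads off both $\norm{J_{c_{\tau(n)}}(y_{\tau(n)})-y_{\tau(n)}}\to 0$ and, after dividing by $\lambda_{\tau(n)}>0$, the direct bound $s_{\tau(n)}\le 2\langle u-z,\,y_{\tau(n)+1}-z\rangle$. The weak-compactness/demiclosedness step is applied to $(y_{\tau(n)})$, yielding $\limsup\langle u-z,\,y_{\tau(n)+1}-z\rangle\le 0$; hence $s_{\tau(n)}\to 0$, then $s_{\tau(n)+1}\to 0$ (since $\norm{y_{\tau(n)+1}-y_{\tau(n)}}\to 0$), and finally Maing\'e's property $s_n\le s_{\tau(n)+1}$ gives $s_n\to 0$ for the full sequence. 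So the case split has to be carried to the very end of the convergence argument, not collapsed after the asymptotic-regularity step. Everything else in your sketch---the reduction $\norm{z_n-y_n}\to 0$, boundedness, the derivation of the master inequality via Lemma~\ref{Lemmabasic} and firm nonexpansiveness, and the case-1 treatment---agrees with the paper.
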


\subsection{Quantitative Lemmas}

We recall the notion of monotone functional for two particular cases.
First consider the strong majorizability relation $\leq^{\ast}$ from \cite{bezem1985strongly} for functions $f,g:\N\to\N$.
$$g\leq^* f := \forall n, m\in\N\,\left(m\leq n\to \left( g(m)\leq f(n) \land f(m)\leq f(n)\right)\right).$$
A function $f:\N\to\N$ is said to be \emph{monotone} if  $f\leq^* f$, which corresponds to saying that $f$ is an increasing function, i.e.\ $\forall n\in\N\, \left(f(n)\leq f(n+1)\right)$. We say that a functional $\varphi:\N\times \N^{\N}\to\N$ is \emph{monotone} if for all $m,n\in\N$ and all $f,g:\N\to\N$,
$$\left(m\leq n \land g\leq^* f\right) \to \left(\varphi(m,g)\leq \varphi(n,f)\right).$$
A function depending on several variables (ranging over $\N$ or over $\N^\N$) is said to be monotone if it is monotone in all the variables.

\begin{remark}\label{maj}
We usually restrict our arguments to monotone functions in $\N^\N$. There is no real restriction in doing so, as for $f: \N \to \N$, one has $f \leq^* \! f^{\mathrm{maj}}$, where $f^{\mathrm{maj}}$ is the monotone function defined by $f^{\mathrm{maj}}(n):= \max\{f(i)\, :\, i \leq n\}$. In this way, we avoid constantly having to switch from $f$ to  $f^{\mathrm{maj}}$, and simplify the notation.
\end{remark}

\begin{notation}\label{notation}
	Consider a function $\varphi$ on tuples of variables $\bar{x}$, $\bar{y}$. If we wish to consider the variables $\bar{x}$ as parameters we write $\varphi[\bar{x}](\bar{y})$. For simplicity of notation we may then even omit the parameters and simply write $\varphi(\bar{y})$.
\end{notation}

We will use the following lemma.  

\begin{lemma}[\cite{LLPP(ta)}]\label{LemmaLLPP}
Let $(s_n)$ be a bounded sequence of non-negative real numbers, with $d\in\N \setminus\{0\}$ an upper bound for $(s_n)$, such that for any $n\in \N$
	\begin{equation*}
		s_{n+1}\leq (1-\alpha_n)s_n+\alpha_nr_n + \gamma_n,
	\end{equation*}
	\noindent where $(\alpha_n)\subset [0,1]$, $(r_n)$ and $(\gamma_n)\subset [0,+\infty)$ are given sequences of real numbers.\\	
	Assume that exist functions $A$, $R$, $G:\N \to \N$ such that
	\begin{itemize}
		\item[$(i)$] $\forall k\in \N \, \left( \sum\limits_{i=1}^{A(k)} \alpha_i \geq k \right)$,
		\item[$(ii)$] $\forall k\in \N \, \forall n\geq R(k) \, \left( r_n \leq \dfrac{1}{k+1} \right)$,
		\item[$(iii)$] $\forall k \in \N \, \forall n \in \N \, \left( \sum\limits_{i=G(k)+1}^{G(k)+n} \gamma_i \leq \dfrac{1}{k+1} \right)$.
	\end{itemize}
	Then
	\begin{equation*}
		\forall k \in \N \,\forall n\geq \theta(k) \, \left(s_n\leq \dfrac{1}{k+1}\right),
	\end{equation*}
	\noindent with $\theta(k):=\theta[A, R, G, d](k):=A(N-1+\lceil \ln(3d(k+1))\rceil)+1$, where $N:=\max\{ R(3k+2), G(3k+2)+1 \}$.
	\end{lemma}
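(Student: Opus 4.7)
The plan is to verify a quantitative version of Xu's Lemma~\ref{LemmaXu} by the standard technique of iterating the recursion starting from a suitably chosen index $N$, and then splitting the resulting upper bound on $s_n$ into three pieces, each of which is controlled by $\tfrac{1}{3(k+1)}$.

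First I would fix $k\in\N$ and set $N := \max\{R(3k+2),\, G(3k+2)+1\}$, chosen so that $r_i \leq \tfrac{1}{3(k+1)}$ for all $i\geq N$ by $(ii)$, and $\sum_{i=N}^{m}\gamma_i \leq \tfrac{1}{3(k+1)}$ for every $m\geq N-1$ by $(iii)$, since $N\geq G(3k+2)+1$. Writing $K:=N-1+\lceil \ln(3d(k+1))\rceil$, we have $\theta(k)=A(K)+1$. For any $n\geq\theta(k)$, a routine induction unfolding the hypothesis $s_{i+1}\leq (1-\alpha_i)s_i+\alpha_ir_i+\gamma_i$ from $i=N$ up to $i=n-1$ yields
\[
s_n \,\leq\, \prod_{i=N}^{n-1}(1-\alpha_i)\,s_N \;+\; \sum_{i=N}^{n-1}\alpha_i r_i \prod_{j=i+1}^{n-1}(1-\alpha_j) \;+\; \sum_{i=N}^{n-1}\gamma_i\prod_{j=i+1}^{n-1}(1-\alpha_j),
\]
and I would bound each of the three terms separately. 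For the first, I use $1-x\leq e^{-x}$ together with $\alpha_i \leq 1$: since $\sum_{i=1}^{N-1}\alpha_i\leq N-1$ and $\sum_{i=1}^{n-1}\alpha_i\geq K$ whenever $n-1\geq A(K)$, one gets $\sum_{i=N}^{n-1}\alpha_i \geq \lceil\ln(3d(k+1))\rceil$, hence $\prod_{i=N}^{n-1}(1-\alpha_i) \leq \tfrac{1}{3d(k+1)}$, and multiplying by the bound $s_N\leq d$ yields $\tfrac{1}{3(k+1)}$. For the middle term, the telescoping identity $\sum_{i=N}^{n-1}\alpha_i\prod_{j=i+1}^{n-1}(1-\alpha_j) = 1-\prod_{i=N}^{n-1}(1-\alpha_i)\leq 1$, combined with $r_i\leq\tfrac{1}{3(k+1)}$, does the job. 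For the last, bounding each inner product by $1$ reduces matters to the $\gamma$-estimate attached to $N$. Summing the three pieces gives $s_n \leq \tfrac{1}{k+1}$ for every $n\geq\theta(k)$, as required.

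Since the argument is essentially algebraic, the main obstacle is just the bookkeeping: choosing the factor $3$ in $3k+2$ and the logarithmic shift $\lceil\ln(3d(k+1))\rceil$ so that the three error contributions assemble to $\tfrac{1}{k+1}$, while exploiting that the partial sums $\sum_{i=1}^{n-1}\alpha_i$ are non-decreasing in $n$ so that the product bound persists for every $n\geq\theta(k)$, not merely at the endpoint $n=\theta(k)$.
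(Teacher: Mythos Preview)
The paper does not actually prove this lemma; it is quoted from the reference \cite{LLPP(ta)} and used as a black box. So there is no in-paper proof to compare against.

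That said, your argument is correct and is precisely the standard route for this type of quantitative Xu-lemma: unfold the recursion from the threshold index $N$, then control the three resulting summands separately using the product estimate $\prod(1-\alpha_i)\le e^{-\sum\alpha_i}$, the telescoping identity $\sum_{i=N}^{n-1}\alpha_i\prod_{j>i}(1-\alpha_j)=1-\prod_{i=N}^{n-1}(1-\alpha_i)$, and the Cauchy bound on the tail $\sum\gamma_i$. Your bookkeeping is also in order: from $\alpha_i\le 1$ you get $A(K)\ge K\ge N$, so $n\ge\theta(k)$ guarantees $n-1\ge N$ and the unfolded estimate is non-vacuous; and the inequality $\alpha_i r_i\prod_{j>i}(1-\alpha_j)\le \tfrac{1}{3(k+1)}\,\alpha_i\prod_{j>i}(1-\alpha_j)$ is valid regardless of the sign of $r_i$ because the remaining factors are non-negative. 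This is exactly the kind of proof one finds in the cited source.
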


It is well-known that for a sequence $(\alpha_n) \subset (0,1)$, having $\sum \alpha_n = \infty$ is equivalent to $\prod (1-\alpha_n)=0$. An alternative version of Lemma~\ref{LemmaLLPP} can be given where one assumes the existence of a rate of convergence $A'$ for the product $\prod (1-\alpha_n)$ instead of a rate of divergence $A$ for the sum $\sum \alpha_n $ (see \cite{LLPP(ta)}  and \cite[Lemma~2.4]{Korn}).

\subsection{The proof by Wang and Cui}\label{sectionoriginalproof}

Let us discuss the proof of Theorem~\ref{ThmWangCui} in detail in order to better understand the required steps of our quantitative analysis. We write $J_n$ to denote $J_{c_n}$, for $n\in\N$.
 \begin{enumerate}[$1)$]
\item The proof starts by showing that $\norm{z_n - y_n}\to 0$, using Lemma~\ref{LemmaXu}. This allows to reduce the convergence of a sequence $(z_n)$ given by \eqref{PPA} to that of a sequence $(y_n)$  given by the exact variant \eqref{exactPPA}.

\item By an easy induction argument it is shown that $(y_n)$ is bounded. 

\item Using Lemma~\ref{Lemmabasic} and the fact that the resolvent functions are firmly nonexpansive it is shown that 
\begin{equation}
\sigma \norm{J_n(y_n)-y_n}^2\leq M \lambda_n+ s_n-s_{n+1},
\end{equation}
where $\sigma, M$ are positive constants and $(s_n)$ is the sequence defined by $s_n:=\norm{y_n-\widetilde{z}\,}^2$, with $\widetilde{z}$ the projection point of $u$ onto $S$.
\item Form this point on, the proof proceeds by distinguishing the cases: $i)$ $(s_{n})$ is eventually decreasing, $ii)$ $(s_{n})$ is not eventually decreasing. In each case it is shown that $(s_n) \to 0$, which entails the result. 

Let us describe how the proof proceeds in each case.
 \begin{enumerate}
\item[$i)$] \underline{$(s_{n})$ is eventually decreasing.}
\begin{enumerate}[$a)$]
\item Since $(y_n)$ is bounded we have that $(s_n)$ is also bounded and therefore it is convergent.
\item By step 3 and the fact that $\lambda_n \to 0$ it folllows that $\norm{J_n(y_n)-y_n}\to 0$.
\item From the previous step and Lemma~\ref{lemmaresolvineq} it follows that $\norm{J_{\sigma}(y_n)-y_n}\to 0$.
\item It is shown that $s_{n+1}\leq (1-\lambda_n)s_n+2 \lambda_n \langle u-\widetilde{z},y_{n+1}-\widetilde{z}\, \rangle$.
\item By sequential weak compactness, using the demiclosedness principle \cite{B(65)} and the fact that $\widetilde{z}$ is the projection point, it follows that $\limsup \langle u-\widetilde{z},y_{n+1}-\widetilde{z} \, \rangle \leq 0$.

\item By Lemma~\ref{LemmaXu} one concludes that $s_n \to 0$.
\end{enumerate} 
\item[$ii)$]  \underline{$(s_{n})$ is not eventually decreasing.}
\begin{enumerate}[$a)$]
\item For a certain sequence $\tau(n)$ and $n_0 \in \N$, we have $s_{\tau(n)}\leq s_{\tau(n)+1}$, for $n \geq n_0$. By step 3, it holds that 
\begin{equation*}
\sigma\norm{J_{\tau(n)}(y_{\tau(n)})-y_{\tau(n)}}^2 \leq M\lambda_{\tau(n)}.
\end{equation*}
The sequence $\tau(n)$ is obtained using \cite[Lemma~3.1]{Mainge}.
\item Since $\lambda_{n}\to 0$ and $\tau(n)\to \infty$ one obtains that $\norm{J_{\tau(n)}(y_{\tau(n)})-y_{\tau(n)}}\to 0$.
\item From the previous step using sequential weak compactness and the demiclosedness principle, it follows that   $\limsup \langle u-\widetilde{z},y_{\tau(n)}-\widetilde{z}\, \rangle\leq 0.$
\item The previous step implies that $\limsup \langle u-\widetilde{z},y_{\tau(n)+1}-\widetilde{z}\, \rangle\leq 0$.
\item Since $s_{\tau(n)}\leq 2 \langle u-\widetilde{z},y_{\tau(n)+1}-\widetilde{z} \,\rangle$ it follows that $\limsup s_{\tau(n)}\leq 0$ and since $(s_{n})\subset [0,+\infty)$ one concludes that $\lim s_{\tau(n)}=0$ and consequentely $\lim s_{\tau(n)+1}=0$.
\item The result follows because $s_n \leq s_{\tau(n)+1}$. 
\end{enumerate}
\end{enumerate} 
\end{enumerate}


\section{Quantitative analysis}\label{Sectionanalysis}

We start by stating our quantitative assumptions. We assume that there exist $c \in \N\setminus \{0\}$ and monotone functions $\ell,L, E: \N \to \N$ and $h: \N\to \N\setminus \{0\}$ such that 

\begin{enumerate}[($Q_1$)]
\item\label{Q0} $\forall n \in \N\left(\lambda_n \geq \frac{1}{h(n)} \right)$,
\item\label{Q1} $\forall k \in \N \,\forall n \geq \ell(k) \left(\lambda_n \leq \frac{1}{k+1}\right)$,
\item\label{Q2} $\forall k \in \N \left(\sum_{i=1}^{L(k)}\lambda_i \geq k\right)$,
\item\label{Q3} $\forall n \in \N \left(\min\{c_n, \delta_n^2\} \geq \frac{1}{c}\right)$,
\item[($Q_{5a}$)]\label{Q5} $\forall k \in \N \, \forall n \in \N \left(\sum_{i=E(k)+1}^{E(k)+n}\norm{e_i}\leq \frac{1}{k+1} \right)$,
\item[($Q_{5b}$)]\label{Q5b} $\forall k \in \N \, \forall n \geq E(k) \left(\frac{\norm{e_n}}{\lambda_n}\leq \frac{1}{k+1} \right)$.
\end{enumerate}

The first condition is a quantitative version of the fact that the sequence $(\lambda_n)$ is always positive.
Condition $(Q_{\ref{Q1}})$ states that $\ell$ is a rate of convergence for the sequence $(\lambda_n)$. Condition $(Q_{\ref{Q2}})$ postulates that $L$ is a rate of divergence for the series $\sum_{n=0}^{\infty} \lambda_n$. Condition ($Q_{\ref{Q3}}$) expresses the fact that the terms of the sequences $(c_n)$ and $(\delta_n)$ are bounded away from zero. Finally, the last two conditions express, respectively, that the sequence of the partial sums $\left(\sum_{i=0}^n \norm{e_i}\right)$ is a Cauchy sequence with rate $E$, or that the sequence $\left(\frac{\norm{e_n}}{\lambda_n}\right)$ converges towards zero with rate of convergence $E$.

In the following we will assume, unless stated otherwise, that we are under the conditions $(Q_1)-(Q_4)$ and either $(Q_{5a})$ or $(Q_{5b})$.

\begin{notation}
In order to make the notation less cumbersome we will write $J_n$ instead of $J_{c_n}$ and $J$ instead of $J_{\frac{1}{c}}$.
\end{notation}

The following functions are useful for our analysis.

\begin{definition}\label{definitionfunctions1}
	We define functions $\zeta,  \overline{\sigma}, \phi_1,\phi_2,\widetilde{f}, r_1,r_2,r_3, r_4$ and $\Phi$, as follows.
	\begin{enumerate}[$(i)$]
	\item Given  $c\in\N$  and $\mathsf{C}: \N \to \N$,  for all $k, n\in \N,$
	 $$\zeta(k,n):=\zeta[c,\mathsf{C}](k,n):= c(k+1)\mathsf{C}(n)-1 \text{ (cf. Lemma~\ref{samefix-pt-set})}.$$
	\item Given $D\in\N$  and $L: \N \to \N$,  for all $k, n\in \N$,
	$$ \overline{\sigma}(k,n):=\overline{\sigma}[L,D](k,n):=L\left(n+\lceil \ln(12D^2(k+1))\rceil\right)+1 \text{ (we have $\overline{\sigma}[L,D]=\sigma[L,4D^2]$, cf. Lemma~\ref{lemmaqtXu1})}.$$
	\item Given $D\in\N$, for all $k, n\in \N$ and $f: \N \to \N$,
	 $$\phi_1(k,n,f):=\phi_1[D](k,n,f):=f\left(\phi_2(k,n,f)\right) \text{, with $\phi_2$ as defined below (cf. also Lemma~\ref{lemmaJyi})}.$$
	\item Given $D\in\N$, for all $k, n\in \N$ and $f: \N \to \N$,
	$$\phi_2(k,n,f):=\phi_2[D](k,n,f):= \max \{n, f^{(4D^2(k+1))}(n)\} \text{ (cf. Lemma~\ref{lemmaJyi})}.$$
	\item Given $k, D\in \N$ and $f, L: \N \to \N$, for all $n\in \N$,
	 $$\widetilde{f}(n):=\widetilde{f}[k, f, L, D](n):=f(\overline{\sigma}(k,n)).$$
	\item Given $c\in\N$, for all $n \in \N$,
	$$ r_1(n):=r_1[c](n):= 12c(n+1)^2-1.$$ 
	\item Given $k,D \in \N$, for all $n \in \N$,
	$$r_2(n):=r_2[k, D](n):=\max\{2(n+1),128D(k+1)^2\}.$$ 
	\item Given $k, c, D \in \N$  and $\ell : \N \to \N$,  for all $n\in\N$,
	$$r_3(n):=r_3[k, c, \ell, D](n):=\ell\left(\max \{96cD^2(n+1)^2-1,256D^2(k+1)^2-1, 16c(r_2(n))^2D^2\}\right).$$
	\item Given $k, c, D \in \N$  and $f, \ell, L: \N\to \N$, for all $n\in\N$,
	$$r_4(n):=r_4[k,c,f,\ell, D,L](n) :=3(k+1)(\widetilde{f}(\phi_2(r_1(n),r_3(n),\widetilde{f}+2))+1).$$
	\item Given $k, c, D\in\N$  and  $f, \ell, L: \N \to \N$, for all $n\in\N$,
	$$\Phi(n):=\Phi[k, c, f, \ell, L, D](n):= \phi_1(r_1(n),r_3(n),\widetilde{f}+2).$$
	\end{enumerate}
\end{definition}

Using the functions from Definition~\ref{definitionfunctions1}, we can now present the main functions.
\begin{definition}\label{definitionfunctions2}
	Given natural numbers $k, c, D\in \N$ and functions $f, \mathsf{C}, \ell, L:\N\to\N$, we define for all $n\in\N$,
	\begin{equation*}
	\Xi_1(n):=\Xi_1[k,f,c, \mathsf{C},\ell, L, D](n):=\zeta(2(6D+1)\max\{r_1, r_4 \}-1, \phi_1(r_1, r_3, \widetilde{f}+2)),
	\end{equation*}
	abbreviating $r_1=r_1(n), r_3=r_3(n)$ and $r_4=r_4(n)$.\\
	Given natural numbers $k, c, D \in \N$ and functions $f,c, \mathsf{C},\ell, L, h:\N\to\N$, we define for all $n\in\N$,
	\begin{equation*}
	\xi(n) := \xi[k, f,c, \mathsf{C},\ell, L, D, h](n):=\zeta(\max \{16h(f(n))(k+1)^2(6D+1),4c(r_2)^2 (6D+1)\},f(n)),
	\end{equation*}
abbreviating $r_2=r_2(n)$, and also
	\begin{equation*}
	\Xi_2(n) := \Xi_2[k, f,c, \mathsf{C},\ell, L,D, h](n):=\xi(\Phi(n)),
	\end{equation*}
	For every natural number $n\in\N$, we consider $\Xi(n):=\max\{ \Xi_1(n), \Xi_2(n)\}$.
\end{definition}

\begin{remark}\label{remrakmonotone}
	It is easy to check that all the functions defined in Definitions~\ref{definitionfunctions1} and \ref{definitionfunctions2}, except $\phi_1$ and $\phi_2$, are monotone, provided that the parameter functions are also  monotone. For $\phi_2$ we always have monotonicity in $n,f$. In order to obtain also monotonicity in $k$ it is enough that $f(n) \geq n$. Similarly for $\phi_1$.
\end{remark}

We are now able to formulate our main result. We then show some easy consequences, in particular a metastable version of Theorem~\ref{ThmWangCui} (cf. Corollary~\ref{cor_metazn}). For each point $z$, below $(s_{n}^{z})$ denotes the auxiliary sequence defined by $\norm{y_n-z}^2$.   
\begin{theorem}\label{theoremwangcui}
Let $(z_n)$ be generated by \eqref{PPA}. Assume that there exist $c \in \N\setminus\{0\}$ and monotone functions  $h:\N \to \N \setminus \{0\}$ and $\ell,L, E:\N \to\N$ such that conditions $(Q_{\ref{Q0}})-(Q_{\ref{Q5}})$ and either $(Q_{5a})$ or $(Q_{5b})$ hold. Let $D \in \N \setminus \{ 0 \}$ be such that $D\geq \max\{2 \norm{u-p},\norm{z_0-p} \}$, for some $p \in S$. Let $\mathsf{C}: \N \to \N$ be such that $c_n \leq \mathsf{C}(n)$, for all $n \in \N$. 
Then for all $k \in \N$ and monotone function $f:\N\to \N$
\begin{equation*}
\exists n \leq \mu(k,f) \, \exists z \in B_D \, \forall i \in [n,f(n)] \left(s_{i}^{z}\leq \frac{1}{k+1} \right),
\end{equation*}
where $\mu(k,f):=\max\{\overline{\sigma}(k, \phi_2(\bar{r},\bar{n},\widetilde{f}+2)),\Phi(\beta(\overline{k}, \Xi))\}$,  with $\bar{r}:=r_1(\beta(\overline{k},\Xi))$, $\bar{n}:=r_3(\beta(\overline{k},\Xi))$, $\overline{k}:=32(k+1)^2-1$ and $\overline{\sigma}, \widetilde{f},r_1, r_3,\phi_2,\Phi$ as in Definition~\ref{definitionfunctions1}, $\Xi(m)$ as in Definition~\ref{definitionfunctions2}, with $\beta$ as in Proposition~\ref{lemmaprojectarg}.
\end{theorem}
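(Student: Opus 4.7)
The plan is to mimic the two-case structure of the Wang--Cui proof described in Subsection~\ref{sectionoriginalproof} while replacing the exact projection $\widetilde z$ of $u$ onto $S$ by a quasi-projection $z\in B_D$ supplied by Proposition~\ref{lemmaprojectarg}. In the original argument the three ingredients one needs are: (a) the recursion $s_{n+1}^{\widetilde z}\le(1-\lambda_n)s_n^{\widetilde z}+2\lambda_n\langle u-\widetilde z,y_{n+1}-\widetilde z\rangle$; (b) the fact that $\limsup\langle u-\widetilde z,y_n-\widetilde z\rangle\le 0$, which is where sequential weak compactness enters; and (c) the dichotomy on monotonicity of $(s_n^{\widetilde z})$. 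Quantitatively these must be replaced by: (a$'$) the same recursion for an arbitrary $z\in B_D$, with an extra error term $\gamma_n$ controlled by $(e_n)$ and by $\|z-\widetilde z\|$; (b$'$) a metastable ``obtuse angle'' inequality valid against a prescribed test function; and (c$'$) a bounded search replacing the dichotomy.

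First I would derive, from Lemma~\ref{Lemmabasic} and the firm nonexpansivity of the resolvents, the recursion $s_{n+1}^z\le(1-\lambda_n)s_n^z+2\lambda_n\langle u-z,y_{n+1}-z\rangle+\gamma_n$ together with the energy inequality $\tfrac1c\|J_n(y_n)-y_n\|^2\le M\lambda_n+s_n^z-s_{n+1}^z+\gamma_n$, where the constants $M$ and $\gamma_n$ are explicit in $D$ and $\|e_n\|$, using $(Q_{\ref{Q3}})$ and either of $(Q_{5a}),(Q_{5b})$ to bound the perturbation uniformly on $[n,f(n)]$. Lemma~\ref{lemmaresolvineq} then allows one to translate approximate zeros of $J_n$ into approximate fixed points of $J=J_{1/c}$, which is exactly what the function $\zeta(k,n)=c(k+1)\mathsf{C}(n)-1$ bookkeeps and why the upper bound $\mathsf{C}$ for $(c_n)$ appears in the statement.

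Next, for a fixed $k$ and monotone $f$, Proposition~\ref{lemmaprojectarg} supplies, through $\beta(\overline k,\Xi)$, a point $z\in B_D$ that behaves like the projection of $u$ onto approximate zero-sets of $\mathsf A$: concretely, $\langle u-z,p-z\rangle$ is small for every $p$ that is an approximate fixed point of $J$ up to the precision encoded in $\Xi$. The precision $\Xi=\max\{\Xi_1,\Xi_2\}$ has been split in exactly the way the subsequent case analysis will need. Feeding this $z$ into the recursion of the first step and applying the quantitative Xu--style rate $\overline\sigma$ from Definition~\ref{definitionfunctions1} (which is precisely Lemma~\ref{LemmaLLPP} tailored to $4D^2$) reduces the task to finding $n$ for which the linear functional $2\langle u-z,y_{i+1}-z\rangle$ is sufficiently small over a window of length dictated by $\widetilde f$.

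The core step is then a finitary version of the Wang--Cui dichotomy performed on $(s_m^z)$ within the search window $[0,\Phi(\beta(\overline k,\Xi))]$. Either there is a block on which $(s_m^z)$ is decreasing of length $\phi_2(r_1,r_3,\widetilde f+2)$, in which case one uses the energy inequality, $(Q_{\ref{Q1}})$ and the approximate fixed-point property of the $y_m$ to conclude together with the quasi-projection bound that the recursion can be iterated via $\overline\sigma$ to obtain $s_i^z\le 1/(k+1)$ on $[n,f(n)]$ for some $n\le\overline\sigma(k,\phi_2(\bar r,\bar n,\widetilde f+2))$; this is the branch whose precision requirement is captured by $\Xi_1$. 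Or else one extracts, by a finite Maingé-type pigeonhole on $r_4$ consecutive ``bad'' increases, an index $\tau$ with $s_\tau^z\le s_{\tau+1}^z$ for which $\|J_\tau(y_\tau)-y_\tau\|^2\le c(M\lambda_\tau+\gamma_\tau)$; combined with the quasi-projection property of $z$ this forces $s_\tau^z\le 1/(k+1)$, and hence $s_i^z\le 1/(k+1)$ on a window whose length is encoded in $\Xi_2$ through $\xi$ and $\Phi$. Taking the maximum yields $\mu(k,f)$.

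The main obstacle, and essentially the whole technical content of the proof, is the calibration of these bookkeeping functions: one must check that the precision required of the quasi-projection $z$ in \emph{both} branches of the case analysis is exactly the $\Xi_1,\Xi_2$ supplied by $\beta(\overline k,\Xi)$ with $\overline k=32(k+1)^2-1$, and that the iteration depth $4D^2(k+1)$ encoded in $\phi_2$, together with the ``$+2$'' shift on $\widetilde f$ coming from the two-step perturbation $(J_n\leftrightarrow J)$ and $(\widetilde z\leftrightarrow z)$, is sufficient for the energy telescoping to survive. Monotonicity of all the iterated functionals (Remark~\ref{remrakmonotone}) ensures that the majorization inequalities needed to chain these bounds through $\overline\sigma$ and $\Phi$ go through, so that once the arithmetic of the iteration depths matches the two branches close uniformly and the theorem follows.
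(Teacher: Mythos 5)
Your overall plan reproduces the paper's own strategy: the approximate projection point $z\in B_D$ supplied by Proposition~\ref{lemmaprojectarg} applied to $(\overline k,\Xi)$, the perturbed recursion and energy inequality of Lemma~\ref{Lemmaoriginaleq10}, the finitary dichotomy on whether $(s^z_n)$ decreases along a long enough initial window, the first branch closed by the windowed Xu-type rate $\overline\sigma$ (with the iteration depth $4D^2(k+1)$ of $\phi_2$ coming from Lemmas~\ref{lemmaJyi}/\ref{lemmametadifference}), and the second branch closed by the finitized Maing\'e functional, with $\Xi_1,\Xi_2$ calibrated to the two branches and $\mu$ obtained as the maximum. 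This is essentially the proof given in the paper.

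Two points in your write-up are, however, genuinely off and would fail if implemented as stated. First, the extra perturbation in the recursion for $s^z_n=\norm{y_n-z}^2$ is \emph{not} ``controlled by $(e_n)$ and by $\norm{z-\widetilde z}$'': the theorem concerns the exact iteration $(y_n)$, so the error sequence $(e_n)$ and conditions $(Q_{5a})/(Q_{5b})$ play no role here at all (they only enter later, in Lemma~\ref{lemma_zn-yn_small} and the corollaries about $(z_n)$), and the true projection point $\widetilde z$ must not appear, since no quantitative bound on $\norm{z-\widetilde z}$ is available --- avoiding $\widetilde z$ is precisely the point of working with Proposition~\ref{lemmaprojectarg}. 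The correct perturbation is $P^z_n=2\norm{J_n(z)-z}\left(3\norm{y_n-z}+\norm{J_n(z)-z}\right)$, which is made small on the relevant finite range via Lemma~\ref{samefix-pt-set} from the hypothesis $\norm{J(z)-z}\leq \frac{1}{\Xi(m_0)+1}$; this is what $\Xi_1$ and $\Xi_2$ actually encode. Second, the rate you invoke is not Lemma~\ref{LemmaLLPP} (a full rate of convergence, which would need the inner-product term to be small for all large $n$) but the windowed Lemma~\ref{lemmaqtXu1}; only the metastable smallness of $\langle u-z,y_{i+1}-z\rangle$ on $[n,\widetilde f(n)]$ is available, and $r_4$ is the precision demanded of $P^z_i$ in \emph{this} first branch, not a pigeonhole length in the second. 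Relatedly, in the second branch a single index $\tau$ with $s^z_\tau\leq s^z_{\tau+1}$ does not suffice: one needs, for every $i$ in the window $[n,f(n)]$, the Maing\'e index $\tau_n(i)$ with $s^z_i\leq s^z_{\tau_n(i)+1}$, and the smallness of $s^z_{\tau_n(i)+1}$ is then derived from the quasi-projection property together with the bound $h$ on $1/\lambda_n$ from $(Q_1)$ (which is why $h$ enters $\xi$). With these corrections your argument coincides with the paper's.
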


In the conditions of Theorem~\ref{theoremwangcui}, we have the following corollaries exhibiting, respectively, a metastability bound and a metastable version of asymptotic regularity for the iteration \eqref{exactPPA}.
\begin{corollary}\label{cor_metayn}
For all $k \in \N$ and monotone function $f:\N\to \N$,
\begin{equation*}
\exists n \leq \mu(4(k+1)^2-1,f) \, \forall i,j \in [n,f(n)] \left(\norm{y_i-y_j}\leq \frac{1}{k+1} \right).
\end{equation*}
\end{corollary}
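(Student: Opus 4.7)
The plan is to derive this Cauchy-type metastability directly from Theorem~\ref{theoremwangcui} by a triangle-inequality argument, after squaring and suitably recalibrating the precision parameter $k$. Intuitively, if all of $y_n, y_{n+1}, \dots, y_{f(n)}$ lie within distance $\frac{1}{2(k+1)}$ of a common point $z$, then any two of them are within $\frac{1}{k+1}$ of each other.

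Concretely, I would apply Theorem~\ref{theoremwangcui} with the input $k' := 4(k+1)^2 - 1$ in place of $k$ (and with the same monotone $f$). This yields some $n \leq \mu(k', f) = \mu(4(k+1)^2-1, f)$ and some $z \in B_D$ such that
\begin{equation*}
\forall i \in [n, f(n)] \, \left( s_i^z = \norm{y_i - z}^2 \leq \frac{1}{k'+1} = \frac{1}{4(k+1)^2} \right).
\end{equation*}
Taking square roots gives $\norm{y_i - z} \leq \frac{1}{2(k+1)}$ for every $i \in [n, f(n)]$, and then for any $i, j \in [n, f(n)]$ the triangle inequality yields
\begin{equation*}
\norm{y_i - y_j} \leq \norm{y_i - z} + \norm{z - y_j} \leq \frac{1}{2(k+1)} + \frac{1}{2(k+1)} = \frac{1}{k+1},
\end{equation*}
which is exactly the asserted metastable Cauchy property with the bound $\mu(4(k+1)^2-1, f)$.

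There is essentially no obstacle here: once Theorem~\ref{theoremwangcui} is in hand, the corollary is a two-line deduction. The only choice to justify is the calibration $k \mapsto 4(k+1)^2 - 1$, which is forced by the fact that $s_i^z$ measures the \emph{squared} distance $\norm{y_i - z}^2$ together with the factor of $2$ lost in the triangle inequality. No further use of the monotonicity of $f$, of the conditions $(Q_{\ref{Q0}})$--$(Q_{\ref{Q5}})$, or of the auxiliary point $z \in B_D$ (which gets absorbed and discarded) is required beyond what is already supplied by Theorem~\ref{theoremwangcui}.
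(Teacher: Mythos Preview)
Your proposal is correct and follows exactly the same approach as the paper: apply Theorem~\ref{theoremwangcui} with precision $4(k+1)^2-1$, take square roots to pass from $s_i^z$ to $\norm{y_i-z}$, and conclude via the triangle inequality. The paper's proof is just a two-line version of what you wrote.
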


\begin{proof}
	By Theorem~\ref{theoremwangcui} there exists $n \leq \mu(4(k+1)^2-1,f)$ and $z \in B_{D}$ such that for all $i \in [n,f(n)]$, $\norm{y_i-z}\leq \frac{1}{2(k+1)}$.
	Hence, for $i,j \in [n,f(n)]$ we have that $\norm{y_i-y_j}\leq \norm{y_i-z}+\norm{y_j-z}\leq \frac{1}{k+1}$.
\end{proof}

\begin{corollary}\label{cor_metaJi}
For all $k \in \N$ and monotone function $f:\N\to \N$, we have
\begin{enumerate}[$(i)$]
\item\label{cor_metJi1} $\exists n \leq \widetilde{\mu}(k,f) \, \forall i \in [n,f(n)] \left(\norm{J_i(y_i)-y_i}\leq \frac{1}{k+1} \right)$,
\item\label{cor_metJi2} $\exists n \leq \widetilde{\mu}(2k+1,f) \, \forall i \in [n,f(n)] \left(\norm{J(y_i)-y_i}\leq \frac{1}{k+1} \right)$,
\end{enumerate}
where $\widetilde{\mu}(k,f):=\max\{\mu(16c^2(k+1)^2-1,\check{f}+1),\ell(4cD(k+1)-1)\}$ and $\check{f}(m):=f(\max\{m, \ell(4cD(k+1)-1)\})$.
\end{corollary}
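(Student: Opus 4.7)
My plan is to deduce part $(i)$ from Theorem~\ref{theoremwangcui} by converting the metastable Cauchy property of $(y_n)$ into asymptotic regularity via the defining recursion, and then to derive part $(ii)$ from $(i)$ by means of Lemma~\ref{lemmaresolvineq}.

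For part $(i)$, I would first rewrite \eqref{exactPPA}, using $\lambda_i+\gamma_i+\delta_i = 1$, as
$$\delta_i\bigl(J_i(y_i) - y_i\bigr) = (y_{i+1} - y_i) - \lambda_i(u - y_i).$$
A standard induction based on the nonexpansiveness of $J_i$ together with the hypotheses $D \geq 2\norm{u - p}$ and $D \geq \norm{z_0 - p}$ shows that $\norm{y_n - p} \leq D$ for all $n$, whence $\norm{y_i - u} \leq 3D/2$. Combined with $\delta_i \geq 1/\sqrt{c}$ from $(Q_{\ref{Q3}})$, this produces the key estimate
$$\norm{J_i(y_i) - y_i} \leq \sqrt{c}\,\norm{y_{i+1} - y_i} + \tfrac{3D\sqrt{c}}{2}\,\lambda_i.$$
It now suffices to bound each summand by $1/(2(k+1))$.

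I would then apply Theorem~\ref{theoremwangcui} with parameters $k_1 := 16c^2(k+1)^2 - 1$ and $f_1 := \check{f} + 1$, obtaining some $n^{\star} \leq \mu(k_1, f_1)$ and $z \in B_D$ with $s_i^{z} \leq 1/(16c^2(k+1)^2)$ for every $i \in [n^{\star}, f_1(n^{\star})]$. A triangle inequality yields $\norm{y_{i+1} - y_i} \leq 1/(2c(k+1))$ on that range, hence $\sqrt{c}\,\norm{y_{i+1} - y_i} \leq 1/(2(k+1))$ (using $c \geq 1$). Setting $L_0 := \ell(4cD(k+1) - 1)$ and $n := \max\{n^{\star}, L_0\} \leq \widetilde{\mu}(k, f)$, the definition $\check{f}(m) = f(\max\{m, L_0\})$ guarantees that $[n, f(n) + 1] \subseteq [n^{\star}, f_1(n^{\star})]$, so the bound on $s_i^{z}$ still applies in the required window. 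Since also $\lambda_i \leq 1/(4cD(k+1))$ on $[n, f(n)]$, the second summand is $\tfrac{3D\sqrt{c}}{2}\lambda_i \leq 3/(8\sqrt{c}(k+1)) \leq 1/(2(k+1))$; summing with the first gives $\norm{J_i(y_i) - y_i} \leq 1/(k+1)$.

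For part $(ii)$, I would apply $(i)$ with $2k + 1$ in place of $k$, yielding some $n \leq \widetilde{\mu}(2k + 1, f)$ with $\norm{J_i(y_i) - y_i} \leq 1/(2k + 2)$ on $[n, f(n)]$. Since $(Q_{\ref{Q3}})$ gives $1/c \leq c_i$, Lemma~\ref{lemmaresolvineq} applied with $a = 1/c$ and $b = c_i$ produces $\norm{J(y_i) - y_i} \leq 2\norm{J_i(y_i) - y_i} \leq 1/(k + 1)$. The main obstacle is handling the case split hidden inside $\check{f}$ via $\max\{\cdot, L_0\}$: when the $n^{\star}$ produced by Theorem~\ref{theoremwangcui} lies below $L_0$, the choice $n := L_0$ is forced, and one must check that $\check{f}$ supplies enough slack for the metastable Cauchy bound to cover the now enlarged window $[L_0, f(L_0) + 1]$.
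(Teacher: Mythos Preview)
Your proposal is correct and follows essentially the same route as the paper's proof: obtain a metastable Cauchy bound for $(y_n)$ from Theorem~\ref{theoremwangcui} (the paper does this via the intermediate Corollary~\ref{cor_metayn}, you appeal to the theorem directly, which amounts to the same thing), translate it into an asymptotic regularity bound using the defining recursion \eqref{exactPPA} together with the $(Q_2)$-bound on $\lambda_i$, and then use Lemma~\ref{lemmaresolvineq} for part~\eqref{cor_metJi2}. The only differences are cosmetic algebraic choices: the paper rewrites the recursion as $(1-\gamma_i)\norm{J_i(y_i)-y_i}\leq \lambda_i\norm{J_i(y_i)-u}+\norm{y_{i+1}-y_i}$ and uses $\delta_i\geq \delta_i^2\geq 1/c$ together with $\norm{J_i(y_i)-u}\leq 2D$, whereas you isolate $\delta_i(J_i(y_i)-y_i)$ directly and use $\delta_i\geq 1/\sqrt{c}$ with $\norm{y_i-u}\leq 3D/2$; both yield the same final bound with the same constants in $\widetilde{\mu}$. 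Your concern about the ``case split'' in $\check f$ is unfounded: since $\check f(n^\star)=f(\max\{n^\star,L_0\})=f(n)$, the inclusion $[n,f(n)+1]\subseteq[n^\star,\check f(n^\star)+1]$ is immediate.
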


\begin{proof}
It follows from Corollary~\ref{cor_metayn} that there exists $n_0 \leq \mu(16c^2(k+1)^2-1,\check{f}+1)$ such that $$\forall i \in [n_0,\check{f}(n_0)]\left(\norm{y_{i+1}-y_{i}}\leq \frac{1}{2c(k+1)} \right).$$ Let $n:=\max\{n_0,\ell(4cD(k+1)-1)\}$. Observe that $n\leq \widetilde{\mu}(k,f)$ and $[n,f(n)] \subseteq [n_0,\check{f}(n_0)]$. Then, for $i \in [n,f(n)]$ we have that $\norm{y_{i+1}-y_i}\leq \frac{1}{2cD(k+1)}$ and $\lambda_i\leq \frac{1}{4cD(k+1)}$, by condition $(Q_2)$.

We have that 
\begin{equation*}
\begin{split}
\norm{J_i(y_i)-y_i} &\leq  \norm{J_i(y_i)-(\lambda_i u + \gamma_i y_i+ \delta_iJ_i(y_i))}+\norm{y_{i+1}-y_i} \\
& \leq  \lambda_i \norm{J_i(y_i)-u}+\gamma_i \norm{J_i(y_i)-y_i}+\norm{y_{i+1}-y_i} .
\end{split}
\end{equation*}

Hence $(1-\gamma_i)\norm{J_i(y_i)-y_i} \leq \lambda_i \norm{J_i(y_i)-u}+\norm{y_{i+1}-y_i}$. Since

\begin{equation*}
\frac{1}{c}\norm{J_i(y_i)-y_i} \leq \delta_i\norm{J_i(y_i)-y_i} \leq (\lambda_i+\delta_i)\norm{J_i(y_i)-y_i}=(1-\gamma_i)\norm{J_i(y_i)-y_i},
\end{equation*}

and $\norm{J_i(y_i)-u}\leq 2D$, we conclude that for $i \in [n,f(n)]$
\begin{equation*}
\norm{J_i(y_i)-y_i} \leq  c\lambda_i \norm{J_i(y_i)-u}+c \norm{y_{i+1}-y_i}\leq \frac{2cD}{4cD(k+1)}+\frac{c}{2c(k+1)}=\frac{1}{k+1}. 
\end{equation*} 
This conclude the proof of Part~\eqref{cor_metJi1}. Part~\eqref{cor_metJi2} then follows from Lemma~\ref{lemmaresolvineq}.
\end{proof}

In the original proof, the convergence of \eqref{PPA} is reduced to that of the exact variant \eqref{exactPPA}. The  quantitative version of that reduction is shown in Lemma~\ref{lemma_zn-yn_small} provided that the sequence $\norm{z_n-y_n}$ is bounded as shown in Lemma~\ref{aux_bounds}.

\begin{lemma}\label{aux_bounds}
	Consider a monotone function $E:\N\to\N$. Let $d_0,d_1, d_2\in \N\setminus\{0\}$ be natural numbers satisfying $d_0\geq \max\left\{\norm{u-p},\norm{z_0-p} \right\}$, $d_1\geq \sum_{i=0}^{E(0)}\norm{e_i}+1$, for some $p\in S$, and $d_2\geq\max\{\norm{z_i-y_i}\, :\, i\leq E(0)\}$.
	\begin{enumerate}[$(i)$]
    \item \label{raiodabola} $\forall n \in \N \left( \norm{y_{n}-p}\leq d_0\right)$,
	\item If $E$ satisfies $(Q_{5a})$, then $\forall n\in \N\, \left( \norm{z_n-y_n} \leq 2d_0+d_1\right)$,
	\item If $E$ satisfies $(Q_{5b})$, then $\forall n\in \N\, \left( \norm{z_n-y_n} \leq d_2\right)$.
	\end{enumerate}
\end{lemma}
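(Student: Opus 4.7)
The plan is to establish all three parts by simple induction on $n$, using three ingredients: first, $p \in S = \mathrm{Fix}(J_{c_n})$, so $p = J_{c_n}(p)$ for every $n$; second, each resolvent $J_{c_n}$ is (firmly) nonexpansive; third, the weights satisfy $\lambda_n + \gamma_n + \delta_n = 1$, making each iteration a convex combination (plus an error term in the inexact case).

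For part~$(i)$, the base case $n=0$ is just $\norm{y_0 - p} = \norm{z_0 - p} \leq d_0$. For the inductive step, I rewrite the exact iteration using $p = J_{c_n}(p)$ as
\[ y_{n+1} - p = \lambda_n(u-p) + \gamma_n(y_n - p) + \delta_n(J_{c_n}(y_n) - J_{c_n}(p)), \]
so the triangle inequality together with nonexpansivity gives $\norm{y_{n+1} - p} \leq \lambda_n \norm{u-p} + (\gamma_n + \delta_n)\norm{y_n - p} \leq d_0$, since $\norm{u-p} \leq d_0$ by hypothesis on $d_0$ and $\norm{y_n - p} \leq d_0$ by the inductive hypothesis.

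For parts~$(ii)$ and~$(iii)$, subtracting the exact iteration from the inexact one and applying nonexpansivity of $J_{c_n}$ yields the common recurrence
\[ \norm{z_{n+1} - y_{n+1}} \leq (1-\lambda_n)\norm{z_n - y_n} + \norm{e_n}, \]
with $\norm{z_0 - y_0} = 0$. For~$(ii)$, since $1-\lambda_n \leq 1$, iteration produces $\norm{z_n - y_n} \leq \sum_{i=0}^{n-1}\norm{e_i}$; I split this sum at index $E(0)$, bound the initial segment by $d_1 - 1$ via the hypothesis on $d_1$, and bound the tail by $1$ via $(Q_{5a})$ with $k=0$, obtaining $\norm{z_n - y_n} \leq d_1 \leq 2d_0 + d_1$. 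For~$(iii)$, when $n \leq E(0)$ the bound $d_2$ holds by hypothesis; when $n \geq E(0)$, condition $(Q_{5b})$ with $k=0$ gives $\norm{e_n} \leq \lambda_n$, so the recurrence becomes $a_{n+1} \leq (1-\lambda_n)a_n + \lambda_n$, a convex combination of $a_n$ and $1$. Hence $a_{n+1} \leq \max(a_n, 1)$, and iterating from $a_{E(0)} \leq d_2$ while using $d_2 \geq 1$ (since $d_2 \in \N\setminus\{0\}$) preserves the bound $d_2$.

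I do not expect any genuine obstacle: each part reduces to a routine induction together with the standard trick of splitting the error series at the index $E(0)$ to exploit either $(Q_{5a})$ or $(Q_{5b})$. The only care needed is to match the two flavors of error control with the corresponding hypothesis on $d_1$ (whose ``$+1$'' absorbs the tail bound from $(Q_{5a})$) or $d_2$ (whose positivity absorbs the ``$+\lambda_n$'' under $(Q_{5b})$).
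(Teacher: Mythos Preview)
Your proposal is correct. Parts~$(i)$ and~$(iii)$ coincide with the paper's argument essentially verbatim: the same induction on $n$ using $p=J_{c_n}(p)$ and nonexpansivity for~$(i)$, and the same convex-combination step $(1-\lambda_n)d_2+\lambda_n\leq d_2$ (using $d_2\geq 1$) for~$(iii)$, with the cases $n<E(0)$ and $n\geq E(0)$ handled identically.

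For part~$(ii)$ the routes diverge slightly. The paper does not iterate the recurrence for $\norm{z_n-y_n}$ directly; instead it first shows $\sum_{i=0}^{m}\norm{e_i}\leq d_1$ for all $m$ (by the same splitting at $E(0)$ that you use), then runs a second induction analogous to part~$(i)$ to obtain $\norm{z_n-p}\leq d_0+d_1$, and finally applies the triangle inequality $\norm{z_n-y_n}\leq\norm{z_n-p}+\norm{y_n-p}\leq 2d_0+d_1$. Your approach is more economical: iterating $\norm{z_{n+1}-y_{n+1}}\leq (1-\lambda_n)\norm{z_n-y_n}+\norm{e_n}$ from $\norm{z_0-y_0}=0$ gives $\norm{z_n-y_n}\leq\sum_{i=0}^{n-1}\norm{e_i}\leq d_1$ directly, which is in fact sharper than the stated bound $2d_0+d_1$. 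The paper's detour has the side benefit of yielding an explicit bound on $\norm{z_n-p}$ (hence on $\norm{z_n}$), but this is not needed for the lemma as stated, so your shortcut is entirely adequate here.
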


\begin{proof}
	Since the resolvent is nonexpansive we have that
	\begin{equation}\label{eqladecima}
	\begin{split}
	\norm{z_{n+1}-y_{n+1}}&\leq \gamma_n\norm{z_n-y_n}+\delta_n\norm{J_{n}z_n-J_{n}y_n}+\norm{e_n}\\
	&\leq \gamma_n\norm{z_n-y_n}+\delta_n\norm{z_n-y_n}+\norm{e_n}\\
	&=(1-\lambda_n)\norm{z_n-y_n}+\norm{e_n}.
	\end{split}
	\end{equation}
	One sees that $\forall n \in \N \left( \norm{y_{n}-p}\leq d_0\right)$ by induction. Indeed, clearly $\norm{y_0-p}=\norm{z_0-p}\leq d_0$. As for the induction step we have
	\begin{equation*}
	\begin{split}
	\norm{y_{n+1}-p}&\leq \lambda_n\norm{u-p}+\gamma_n\norm{y_n-p}+\delta_n\norm{y_n-p}\\
	&=\lambda_n\norm{u-p}+(1-\lambda_n)\norm{y_n-p}\\
	&\leq \lambda_nd_0+(1-\lambda_n)d_0=d_0.
	\end{split}
	\end{equation*}
	Assume that $E$ satisfies $(Q_{5a})$. In particular, for $k=0$ we have $\forall n\in \N\left(\sum_{i=E(0)+1}^{E(0)+n}\norm{e_i}\leq 1 \right)$. 

\noindent For all $m\in\N$
	\begin{equation*}
	\sum_{i=0}^{m}\norm{e_i}\leq \sum_{i=0}^{E(0)}\norm{e_i}+\sum_{i=E(0)+1}^{m}\norm{e_i}\leq d_1.
	\end{equation*} 
	Then, similarly to \eqref{raiodabola}, one shows that $\norm{z_n-p}\leq d_0+ d_1$. Hence, for all $n\in \N$
	\begin{equation*}
	\norm{z_{n}-y_{n}}\leq\norm{z_n-p}+\norm{y_n-p}\leq 2d_0+d_1.
	\end{equation*}
	
	Assume now that $E$ satisfies $(Q_{5b})$. We show by induction that $\norm{z_n-y_n}\leq d_2$, for all $n \in \N$. Clearly $\norm{z_0-y_0}\leq d_2$. Assume that $\norm{z_n-y_n}\leq d_2$. If $n<E(0)$, then $n+1 \leq E(0)$ and therefore $\norm{z_{n+1}-y_{n+1}}\leq d_2$. For $n \geq E(0)$, we have $\norm{e_n}\leq \lambda_n$. Then by \eqref{eqladecima}, the induction hypothesis, and the fact that $d_2\geq 1$
	\begin{equation*}
	\norm{z_{n+1}-y_{n+1}}\leq (1-\lambda_n)d_2+\lambda_n\leq(1-\lambda_n)d_2+\lambda_nd_2= d_2.\qedhere
	\end{equation*}
\end{proof}

\begin{lemma}\label{lemma_zn-yn_small}
Let $(z_n)$, $(y_n)$ be given, respectively, by \eqref{PPA} and \eqref{exactPPA}. Consider monotone functions $L, E:\N\to\N$ such that $L$ satisfies $(Q_3)$ and $E$ satisfies either $(Q_{5a})$ or $(Q_{5b})$. Let $d_0, d_1, d_2\in \N\setminus\{0\}$ be natural numbers as in Lemma~\ref{aux_bounds}. Define $d:=\max\{2d_0+d_1,d_2\}$.  Then the sequence $(z_n-y_n)$ converges to zero and has rate of convergence $\Theta$, i.e.
\begin{equation*}
\forall k \in \N \, \forall n \geq \Theta(k)\left(\norm{z_n -y_n}\leq \frac{1}{k+1} \right),
\end{equation*}
where $\Theta(k):=\Theta[L, E, d_0, d_1, d_2](k):= L(E(3k+2)+\lceil\ln(3d(k+1))\rceil)+1$.
\end{lemma}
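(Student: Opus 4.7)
The plan is to apply the quantitative Xu-type recursion in Lemma~\ref{LemmaLLPP} to the sequence $s_n := \norm{z_n - y_n}$. The ingredients are essentially already present: Lemma~\ref{aux_bounds} gives the uniform bound $s_n \leq d$, and the first display \eqref{eqladecima} in its proof yields the recursive inequality
\begin{equation*}
\norm{z_{n+1} - y_{n+1}} \leq (1-\lambda_n)\norm{z_n - y_n} + \norm{e_n},
\end{equation*}
which is exactly of the shape required by Lemma~\ref{LemmaLLPP} with $\alpha_n = \lambda_n$. So the only work is to choose the auxiliary sequences $r_n$ and $\gamma_n$, together with corresponding functionals $R$ and $G$, appropriately in each of the two hypotheses on the errors.

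First I would handle the case $(Q_{5a})$. Here I write the recursion as $s_{n+1} \leq (1-\lambda_n) s_n + \lambda_n \cdot 0 + \norm{e_n}$, i.e.\ $r_n := 0$ and $\gamma_n := \norm{e_n}$. Condition $(i)$ of Lemma~\ref{LemmaLLPP} is witnessed by $A := L$ thanks to $(Q_{\ref{Q2}})$; condition $(ii)$ is witnessed by $R \equiv 0$ since $r_n \equiv 0$; condition $(iii)$ is witnessed by $G := E$ directly from $(Q_{5a})$. In the case $(Q_{5b})$ I would instead rewrite the same recursion as $s_{n+1} \leq (1-\lambda_n) s_n + \lambda_n \cdot (\norm{e_n}/\lambda_n) + 0$, so that now $r_n := \norm{e_n}/\lambda_n$ and $\gamma_n := 0$. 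Condition $(i)$ is witnessed again by $L$, condition $(ii)$ is witnessed by $R := E$ from $(Q_{5b})$, and condition $(iii)$ is trivially witnessed by $G \equiv 0$ since $\gamma_n = 0$.

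In both cases the conclusion of Lemma~\ref{LemmaLLPP} produces the rate $\theta(k) = A(N - 1 + \lceil \ln(3d(k+1))\rceil) + 1$ with $N = \max\{R(3k+2), G(3k+2)+1\}$. In case $(Q_{5a})$ this gives $N = E(3k+2) + 1$, hence $\theta(k) = L(E(3k+2) + \lceil \ln(3d(k+1))\rceil) + 1 = \Theta(k)$. In case $(Q_{5b})$ one gets $N = \max\{E(3k+2), 1\} \leq E(3k+2) + 1$, and the monotonicity of $L$ yields the same upper bound $\Theta(k)$. Since $d$ dominates both $2d_0+d_1$ and $d_2$, this $d$ is admissible for Lemma~\ref{LemmaLLPP} under either error hypothesis.

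There is no real obstacle: the argument is a direct unpacking of the recursion and a careful bookkeeping exercise to select $r_n, \gamma_n$ so as to route each hypothesis $(Q_{5a})$ or $(Q_{5b})$ into the correct slot of Lemma~\ref{LemmaLLPP}. The only mildly delicate point is verifying that the same closed-form expression $\Theta(k)$ dominates both cases, which is exactly where monotonicity of $L$ (guaranteed by Remark~\ref{maj}) is used.
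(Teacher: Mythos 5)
Your proposal is correct and follows essentially the same route as the paper: both cases are handled by instantiating Lemma~\ref{LemmaLLPP} with $s_n=\norm{z_n-y_n}$, $\alpha_n=\lambda_n$, routing the error term into $\gamma_n=\norm{e_n}$ (with $G=E$, $R\equiv 0$) under $(Q_{5a})$ and into $r_n=\norm{e_n}/\lambda_n$ (with $R=E$, $G\equiv 0$) under $(Q_{5b})$, and then using monotonicity of $L$ (which, minor point, is a hypothesis of the lemma rather than a consequence of Remark~\ref{maj}) to dominate both resulting rates by $\Theta(k)$.
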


\begin{proof}
In the case where $E:\N \to \N$ satisfies $(Q_{5a})$, by Lemma~\ref{aux_bounds}, we have $\norm{z_n-y_n}\leq 2d_0+d_1$ for all $n\in\N$. By inequality \eqref{eqladecima} we can instantiate Lemma~\ref{LemmaLLPP} with $s_n=\norm{z_n-y_n}$, $\alpha_n=\lambda_n$, $r_n\equiv 0$, $\gamma_n=\norm{e_n}$, $A=L$, $R\equiv 0$ and $G=E$. Hence 
\begin{equation}\label{theta1}
\forall k \in \N \,\forall n \geq \theta_1(k)\left(\norm{z_n-y_n}\leq\frac{1}{k+1}\right),
\end{equation}
with $\theta_1(k):=L\left(E(3k+2)+\lceil \ln(3(2d_0+d_1)(k+1))\rceil \right)+1$.

In the case where $E:\N \to \N$ satisfies $(Q_{5b})$, by Lemma~\ref{aux_bounds}, we have $\norm{z_n-y_n}\leq d_2$ for all $n\in\N$. Instantiating Lemma~\ref{LemmaLLPP} with $s_n=\norm{z_n-y_n}$, $\alpha_n=\lambda_n$, $r_n=\frac{\norm{e_n}}{\lambda_n} $, $\gamma_n\equiv 0$, $A=L$, $R=E$ and $G\equiv 0$, 
\begin{equation}\label{theta2}
\forall k \in \N \, \forall n \geq \theta_2(k)\left(\norm{z_n-y_n}\leq\frac{1}{k+1}\right),
\end{equation}
with $\theta_2(k):=L\left(\max\{E(3k+2)-1,0\}+\lceil \ln(3d_2(k+1))\rceil \right)+1$.
The monotonicity of the function $L$ implies $\max\{\theta_1(k),\theta_2(k)\}\leq \Theta(k)$. From \eqref{theta1} and \eqref{theta2}, we conclude the result.
\end{proof}

In the conditions of Theorem~\ref{theoremwangcui} and Lemma~\ref{lemma_zn-yn_small}, we have the following corollary exhibiting a metastability bound for the iteration \eqref{PPA}.
\begin{corollary}\label{cor_metazn}
For all $k \in \N$ and monotone function $f:\N\to \N$,
\begin{equation*}
\exists n \leq \nu(k,f) \, \forall i,j \in [n,f(n)] \left(\norm{z_i-z_j}\leq \frac{1}{k+1} \right),
\end{equation*}
where $\nu(k,f):= \max\{\mu(36(k+1)^2-1,\widehat{f}), \Theta(3k+2)\}$, with $\widehat{f}(m):=f(\max\{m, \Theta(3k+2)\})$, $\mu$ as in Theorem~\ref{theoremwangcui} and $\Theta$ as in Lemma~\ref{lemma_zn-yn_small}.
\end{corollary}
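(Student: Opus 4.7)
The plan is to combine the metastable Cauchy property for $(y_n)$ provided by Corollary~\ref{cor_metayn} with the rate of convergence for $\norm{z_n - y_n}\to 0$ provided by Lemma~\ref{lemma_zn-yn_small}, using the triangle inequality
\[
\norm{z_i - z_j} \leq \norm{z_i - y_i} + \norm{y_i - y_j} + \norm{y_j - z_j},
\]
so that it suffices to bound each of the three summands by $\frac{1}{3(k+1)}$ on a common window $[n,f(n)]$.

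For the two outer terms, Lemma~\ref{lemma_zn-yn_small} applied with error parameter $3k+2$ gives $\norm{z_i - y_i} \leq \frac{1}{3k+3} = \frac{1}{3(k+1)}$ for every $i \geq \Theta(3k+2)$, so these terms are controlled whenever $i,j \geq \Theta(3k+2)$. For the middle term, I invoke Corollary~\ref{cor_metayn} with error parameter $3k+2$ and with the shifted monotone function $\widehat{f}$ from the statement: this yields some
\[
n_0 \leq \mu\bigl(4(3k+3)^2-1,\widehat{f}\bigr) = \mu\bigl(36(k+1)^2-1,\widehat{f}\bigr)
\]
such that $\norm{y_i - y_j} \leq \frac{1}{3(k+1)}$ for all $i,j \in [n_0, \widehat{f}(n_0)]$, which explains the constant $36(k+1)^2-1$ appearing in $\nu$.

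I then set $n := \max\{n_0, \Theta(3k+2)\}$; by the very definition of $\nu$ this satisfies $n \leq \nu(k,f)$. The key bookkeeping check is that $[n,f(n)] \subseteq [n_0, \widehat{f}(n_0)]$: the lower endpoint is immediate since $n \geq n_0$, and for the upper endpoint one has $f(n) = f(\max\{n_0,\Theta(3k+2)\}) = \widehat{f}(n_0)$ directly from the definition of $\widehat{f}$. Hence for any $i,j \in [n,f(n)]$ all three bounds hold simultaneously, and the triangle inequality concludes.

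There is no real obstacle; the argument is essentially a bookkeeping combination of two previously established metastability/rate-of-convergence results. The only points requiring attention are the shift built into $\widehat{f}$ (which is precisely what forces the $y$-Cauchy window $[n_0,\widehat{f}(n_0)]$ to contain $[n,f(n)]$ once $n$ is taken to be at least $\Theta(3k+2)$), and the quadratic dependence $4(k+1)^2-1$ in Corollary~\ref{cor_metayn}, which after the threefold split of the error produces the constant $36(k+1)^2-1$.
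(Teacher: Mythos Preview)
Your proof is correct and follows essentially the same approach as the paper: apply Corollary~\ref{cor_metayn} with error parameter $3k+2$ and counterfunction $\widehat{f}$ to get $n_0$, set $n:=\max\{n_0,\Theta(3k+2)\}$, check $[n,f(n)]\subseteq[n_0,\widehat{f}(n_0)]$, and conclude via the triangle inequality together with Lemma~\ref{lemma_zn-yn_small}. The bookkeeping (in particular the identity $f(n)=\widehat{f}(n_0)$ and the explanation of the constant $36(k+1)^2-1$) matches the paper's argument exactly.
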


\begin{proof}
	By Corollary~\ref{cor_metayn}, there exists $n_0 \leq \mu(36(k+1)^2-1,\widehat{f})$ such that for all $i,j \in \left[n_0,\widehat{f}(n_0)\right]$ it holds that $\norm{y_i-y_j}\leq \frac{1}{3(k+1)}$. Define $n:= \max\{n_0,\Theta(3k+2)\}\leq \nu(k,f)$. Then clearly $[n,f(n)]\subset [n_0,\widehat{f}(n_0)]$ and for $i \in [n,f(n)]$, we have $i \geq \Theta(3k+2)$.
	Using Lemma~\ref{lemma_zn-yn_small} we conclude that
	\begin{equation*}
	\norm{z_i-z_j}\leq \norm{z_i-y_i}+\norm{y_i-y_j}+\norm{y_j-z_j}\leq \frac{1}{k+1}.\qedhere
	\end{equation*} 
\end{proof}

\begin{remark}\label{rho1remark1}
	The functional $\rho$ defined by $\rho(k,f):=\nu(k,f^{maj})$ satisfies \eqref{rho1}, i.e. the restriction to monotone functions in Corollary~\ref{cor_metazn} poses no real limitation (cf. Remark~\ref{maj}). 
\end{remark}

Using Corollary~\ref{cor_metaJi} and Lemma~\ref{lemma_zn-yn_small} we obtain a metastable version of the asymptotic regularity for the iteration \eqref{PPA}.
\begin{corollary}\label{cor_metaJiz}
For all $k \in \N$ and monotone function $f:\N\to \N$, we have
\begin{enumerate}[$(i)$]
\item\label{cor_metJiz1} $\exists n \leq \widetilde{\nu}(k,f) \, \forall i \in [n,f(n)] \left(\norm{J_i(z_i)-z_i}\leq \frac{1}{k+1} \right)$,
\item\label{cor_metJiz2} $\exists n \leq \widetilde{\nu}(2k+1,f) \, \forall i \in [n,f(n)] \left(\norm{J(z_i)-z_i}\leq \frac{1}{k+1} \right)$,
\end{enumerate}
where $\widetilde{\nu}(k,f):=\max\{\widetilde{\mu}(2k+1,\breve{f}),\Theta(4k+3)\}$ and $\breve{f}(m):=f(\max\{m, \Theta(4k+3)\})$.
\end{corollary}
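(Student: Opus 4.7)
The plan is to obtain the metastable asymptotic regularity for $(z_n)$ as a straightforward transfer from the already-established asymptotic regularity for $(y_n)$ in Corollary~\ref{cor_metaJi}, combined with the quantitative comparison $\norm{z_n-y_n}\to 0$ from Lemma~\ref{lemma_zn-yn_small}. This mirrors exactly the way Corollary~\ref{cor_metazn} transfers metastability from $(y_n)$ to $(z_n)$, so the structure of the argument should be essentially identical, only with a different quantitative identity applied pointwise.

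For part \eqref{cor_metJiz1}, the key inequality is the triangle estimate
\begin{equation*}
\norm{J_i(z_i)-z_i}\leq \norm{J_i(z_i)-J_i(y_i)}+\norm{J_i(y_i)-y_i}+\norm{y_i-z_i}\leq 2\norm{z_i-y_i}+\norm{J_i(y_i)-y_i},
\end{equation*}
using that $J_i$ is nonexpansive. So it suffices to arrange each summand to be at most $\tfrac{1}{2(k+1)}$. First I apply Corollary~\ref{cor_metaJi}\eqref{cor_metJi1} with input $(2k+1,\breve{f})$ to obtain $n_0\leq \widetilde{\mu}(2k+1,\breve{f})$ with $\norm{J_i(y_i)-y_i}\leq\tfrac{1}{2(k+1)}$ for all $i\in[n_0,\breve{f}(n_0)]$. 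Then I set $n:=\max\{n_0,\Theta(4k+3)\}$, which is bounded by $\widetilde{\nu}(k,f)$ by definition. By construction of $\breve{f}$, one has $f(n)=\breve{f}(n_0)$ when $n_0\leq \Theta(4k+3)$ (and otherwise $f(n)\leq \breve{f}(n_0)$ by monotonicity of $f$), so that $[n,f(n)]\subseteq[n_0,\breve{f}(n_0)]$. For any $i$ in this window, $i\geq\Theta(4k+3)$ and Lemma~\ref{lemma_zn-yn_small} gives $\norm{z_i-y_i}\leq \tfrac{1}{4(k+1)}$, hence $2\norm{z_i-y_i}\leq\tfrac{1}{2(k+1)}$. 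Combining with the estimate from Corollary~\ref{cor_metaJi}\eqref{cor_metJi1} yields $\norm{J_i(z_i)-z_i}\leq\tfrac{1}{k+1}$.

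For part \eqref{cor_metJiz2}, I use Lemma~\ref{lemmaresolvineq} with $a=\tfrac{1}{c}$ and $b=c_i$: since $(Q_{\ref{Q3}})$ gives $c_i\geq\tfrac{1}{c}$, one has $\norm{J(z_i)-z_i}\leq 2\norm{J_i(z_i)-z_i}$. So applying part \eqref{cor_metJiz1} with $k$ replaced by $2k+1$ delivers an index $n\leq\widetilde{\nu}(2k+1,f)$ with $\norm{J_i(z_i)-z_i}\leq\tfrac{1}{2(k+1)}$ on $[n,f(n)]$, and the estimate immediately passes to $\norm{J(z_i)-z_i}\leq\tfrac{1}{k+1}$.

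There is no real obstacle here; the only thing to keep straight is the constant bookkeeping, namely that the factor of two absorbed by the nonexpansiveness step of $J_i$ is what forces the $4k+3$ argument into $\Theta$ (yielding $\tfrac{1}{4(k+1)}$ after adding one and multiplying by two) while the $2k+1$ argument into $\widetilde{\mu}$ accounts for the $\norm{J_i(y_i)-y_i}$ term. The $\breve{f}$ construction is the standard trick for composing two quantitative ``eventually'' properties into a common metastable window.
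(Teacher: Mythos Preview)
Your proof is correct and follows essentially the same approach as the paper's own proof: the same triangle inequality $\norm{J_i(z_i)-z_i}\leq 2\norm{z_i-y_i}+\norm{J_i(y_i)-y_i}$, the same invocation of Corollary~\ref{cor_metaJi} with parameter $2k+1$ and counterfunction $\breve{f}$, the same definition $n:=\max\{n_0,\Theta(4k+3)\}$, and the same appeal to Lemma~\ref{lemmaresolvineq} for part~\eqref{cor_metJiz2}. The only cosmetic difference is that your case distinction for showing $[n,f(n)]\subseteq[n_0,\breve{f}(n_0)]$ is unnecessary, since in fact $f(n)=f(\max\{n_0,\Theta(4k+3)\})=\breve{f}(n_0)$ holds in both cases directly from the definition of $\breve{f}$.
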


\begin{proof}
By Corollary~\ref{cor_metaJi} there exists $n_0 \leq \widetilde{\mu}(2k+1, \breve{f})$ such that 
\begin{equation*}
\forall i \in [n_0,\breve{f}(n_0)] \left( \norm{J_i(y_i)-y_i}\leq \frac{1}{2(k+1)} \right).
\end{equation*}

Let $n:=\max\{n_0, \Theta(4k+3)\}$. Then $n \leq \widetilde{\nu}(k,f)$ and for $i \in [n,f(n)] \subseteq [n_0, \breve{f}(n_0)]$ we have that 
\begin{equation*}
\begin{split}
\norm{J_i(z_i)-z_i}& \leq \norm{z_i-y_i} + \norm{y_i-J_i(y_i)}+ \norm{J_i(y_i)-J_i(z_i)}\\
& \leq 2\norm{z_i-y_i} + \norm{y_i-J_i(y_i)} \leq \frac{1}{k+1}.
\end{split}
\end{equation*}
This shows Part~\eqref{cor_metJiz1}. Part~\eqref{cor_metJiz2} then follows from Lemma~\ref{lemmaresolvineq}.
\end{proof}

\begin{remark}\label{rho1remark2}
Similarly to Remark~\ref{rho1remark1}, the functional $\widetilde{\rho}$ defined by $\widetilde{\rho}(k,f):=\widetilde{\nu}(k,f^{maj})$ satisfies \eqref{rho2}. 
\end{remark}

In the remainder of this section we carry out the analysis of Theorem~\ref{ThmWangCui} which provides a proof to Theorem~\ref{theoremwangcui}. We begin with a lemma relating the resolvent functions $J$ and $J_n$.

\begin{lemma}[\cite{DP(ta)}]\label{samefix-pt-set}Consider a monotone function $\mathsf{C}:\N\to \N$ such that $c_n\leq \mathsf{C}(n)$, for all $n\in\N$. For any $k,n\in \N$ and any $z\in H$,
	\[
	\norm{J(z)-z}\leq \frac{1}{\zeta(k,n)+1} \;\to\; \forall n'\leq n\, \left(\norm{J_{n'}(z)-z}\leq \frac{1}{k+1}\right),
	\]
	with $\zeta(k,n):=\zeta[c,\mathsf{C}](k,n):= c(k+1)\mathsf{C}(n)-1$.
\end{lemma}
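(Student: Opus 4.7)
The plan is to exploit the resolvent identity to express $J_{c_{n'}}(z)$ in terms of $J = J_{1/c}$, then use the nonexpansiveness of $J$ together with condition $(Q_{\ref{Q3}})$ and the bound $c_{n'}\leq \mathsf{C}(n')$.

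Concretely, fix $n'\leq n$ and $z\in H$. By $(Q_{\ref{Q3}})$ we have $c_{n'}\geq 1/c$, hence the scalar $\alpha := 1/(c\, c_{n'})$ lies in $(0,1]$. Instantiating the resolvent identity with $a = c_{n'}$ and $b = 1/c$ gives
\[
J_{c_{n'}}(z) \;=\; J\!\left(\alpha\, z + (1-\alpha)J_{c_{n'}}(z)\right).
\]
By the triangle inequality and the nonexpansiveness of $J$,
\[
\norm{J_{c_{n'}}(z)-z} \;\leq\; \norm{J\!\left(\alpha z + (1-\alpha)J_{c_{n'}}(z)\right) - J(z)} + \norm{J(z)-z} \;\leq\; (1-\alpha)\norm{J_{c_{n'}}(z)-z} + \norm{J(z)-z},
\]
so that $\alpha\norm{J_{c_{n'}}(z)-z}\leq \norm{J(z)-z}$, i.e.\ $\norm{J_{c_{n'}}(z)-z}\leq c\,c_{n'}\norm{J(z)-z}$.

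Since $\mathsf{C}$ is monotone and $c_{n'}\leq \mathsf{C}(n')\leq \mathsf{C}(n)$, combining this with the hypothesis $\norm{J(z)-z}\leq 1/(\zeta(k,n)+1) = 1/(c(k+1)\mathsf{C}(n))$ yields
\[
\norm{J_{n'}(z)-z} \;\leq\; c\,\mathsf{C}(n)\cdot \frac{1}{c(k+1)\mathsf{C}(n)} \;=\; \frac{1}{k+1},
\]
as required. There is no serious obstacle: the only thing to be careful with is getting the ratio $b/a$ the right way around in the resolvent identity so that $\alpha\leq 1$ (which is exactly what $(Q_{\ref{Q3}})$ guarantees), and then invoking monotonicity of $\mathsf{C}$ to pass from $n'$ to $n$ in the final bound.
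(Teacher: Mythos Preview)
Your argument is correct: the resolvent identity with $a=c_{n'}$, $b=1/c$ together with nonexpansiveness of $J$ gives $\norm{J_{n'}(z)-z}\leq c\,c_{n'}\norm{J(z)-z}$, and then monotonicity of $\mathsf{C}$ and the assumed bound finish the job. The paper does not supply its own proof of this lemma (it is quoted from \cite{DP(ta)}), but your derivation is exactly the standard one and there is nothing to add.
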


\begin{notation}
	For $p \in S$  and $D \in \N$, we denote by $B_{D}$ the closed ball centered at $p$ with radius $D$, i.e.\ $
	B_{D}:=\{ z \in H: \norm{z-p}\leq D\}.$ In the following, a point $p$ is always made clear from the context.
\end{notation}

We recall the following quantitative result related to the projection argument.

\begin{proposition}[\cite{PP(ta)}]\label{lemmaprojectarg}
	Let $D\in \N\setminus \{0\}$ be such that $D\geq 2\norm{u-p}$ for some $p\in S$.\\
	For any natural number $k$ and monotone function $f:\N \to \N $, there are $n \leq \beta(k,f)$ and $z\in B_D$ such that
	$$\norm{J(z)-z} \leq \frac{1}{f(n)+1} \, \land \, \forall y\in B_D 
	\left(\norm{J(y)-y}\leq \frac{1}{n+1} \to \langle u-z,y-z\rangle \leq \frac{1}{k+1}\right),$$ 
	where $\beta(k,f):=\beta[D](k,f):=24D\left(w_{f}^{(R)}(0)+1\right)^2$,\\ with  $R:=R[D,k]:=4D^4(k+1)^2$ and $w_{f}(m):=w_f[D,f](m):=\max\{ f(24D(m+1)^2),\, 24D(m+1)^2 \}$.
\end{proposition}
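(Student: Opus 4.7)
The plan is to quantitatively mirror the two-case dichotomy in Wang and Cui's original proof, using the approximate projection supplied by Proposition~\ref{lemmaprojectarg} in place of the true projection of $u$ onto $S$. Fix $k\in\N$ and monotone $f$, set $\overline{k}:=32(k+1)^2-1$, and apply Proposition~\ref{lemmaprojectarg} with counterfunction $\Xi$ from Definition~\ref{definitionfunctions2} to produce $n^{*}\leq\beta(\overline{k},\Xi)$ and $z\in B_D$ satisfying
\[
\norm{J(z)-z}\leq\tfrac{1}{\Xi(n^{*})+1},\qquad \forall y\in B_D\,\Bigl(\norm{J(y)-y}\leq\tfrac{1}{n^{*}+1}\to\langle u-z,y-z\rangle\leq\tfrac{1}{\overline{k}+1}\Bigr).
\]
Because $\Xi\geq\Xi_1,\Xi_2$, Lemma~\ref{samefix-pt-set} transfers the $J$-fixed-point threshold to every $J_m$ appearing in the relevant windows below. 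This $z$ serves as the approximate projection throughout the remainder of the argument.

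The core recursive estimate is step~3 of the original proof: from firm nonexpansivity of the $J_m$, Lemma~\ref{Lemmabasic}(ii) and condition $(Q_3)$ one derives
\[
\tfrac{1}{c}\norm{J_m(y_m)-y_m}^2\leq M\lambda_m+s_m^z-s_{m+1}^z,
\]
where $M$ is an explicit polynomial in $D$ and $s_m^z:=\norm{y_m-z}^2$. Now carry out a \emph{within-window} dichotomy relative to $z$: either \textbf{(Case i)} $s_{m+1}^z\leq s_m^z+\lambda_m$ at every $m$ in a prefix of length $\phi_2(\bar r,\bar n,\widetilde f+2)$, or \textbf{(Case ii)} there exists an index $\tau$ in that prefix with $s_\tau^z<s_{\tau+1}^z$. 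These correspond precisely to the two summands in $\mu(k,f)$.

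In Case~i, combine the update rule with Lemma~\ref{Lemmabasic}(i) to obtain $s_{m+1}^z\leq(1-\lambda_m)s_m^z+2\lambda_m\langle u-z,y_{m+1}-z\rangle$. The hypothesis of Case~i forces $\norm{J_m(y_m)-y_m}$ to be small throughout the prefix (with the threshold controlled by $r_1,r_3$), so Lemma~\ref{samefix-pt-set} and the projection property give $\langle u-z,y_{m+1}-z\rangle\leq\tfrac{1}{\overline{k}+1}$. Feeding this into Lemma~\ref{LemmaLLPP} with $\alpha_m=\lambda_m$ yields $s_i^z\leq\tfrac{1}{k+1}$ for all $i\geq\overline{\sigma}(k,\phi_2(\bar r,\bar n,\widetilde f+2))$. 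In Case~ii, at the bad index $\tau$ the recursive estimate gives $\norm{J_\tau(y_\tau)-y_\tau}^2\leq Mc\lambda_\tau$, so taking $\tau$ large enough that $\lambda_\tau$ is small (enforced by the thresholds inside $\xi$ and $r_2$) yields $\norm{J(y_\tau)-y_\tau}\leq\tfrac{1}{n^{*}+1}$ through Lemma~\ref{samefix-pt-set}. Applying the projection property to $y_\tau$ and to $y_{\tau+1}$ (both in $B_D$ by Lemma~\ref{aux_bounds}) gives $\langle u-z,y_{\tau+1}-z\rangle\leq\tfrac{1}{\overline{k}+1}$, and Lemma~\ref{Lemmabasic}(i) upgrades this to $s_{\tau+1}^z\leq\tfrac{1}{k+1}$. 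Since the prefix is chosen with no further increases of $(s_m^z)$ after $\tau$, one has $s_i^z\leq s_{\tau+1}^z$ throughout $[n,f(n)]$ for an appropriate $n\leq\Phi(\beta(\overline{k},\Xi))$.

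The main obstacle will be the combinatorial bookkeeping: each quantitative step generates a fresh threshold (for $\lambda_m$, $\norm{e_m}$, $\norm{J_m(y_m)-y_m}$, $\langle u-z,y_{m+1}-z\rangle$) that must be simultaneously compatible across both cases and across the window $[n,f(n)]$; the composite functions $r_1,r_2,r_3,r_4,\widetilde f,\phi_1,\phi_2,\overline{\sigma},\Phi,\xi$ are designed to encode exactly this compatibility. Crucially, the approximate projection $z$ must be chosen \emph{before} the dichotomy, via a single counterfunction $\Xi=\max\{\Xi_1,\Xi_2\}$, so that $z$ is good enough for whichever of the two cases turns out to apply. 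The final bound $\mu(k,f)$ is then the maximum of the per-case rates $\overline{\sigma}(k,\phi_2(\bar r,\bar n,\widetilde f+2))$ and $\Phi(\beta(\overline{k},\Xi))$.
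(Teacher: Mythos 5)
Your proposal does not prove the statement in question; it proves a different result while assuming the statement. Proposition~\ref{lemmaprojectarg} is the quantitative projection argument itself: a standalone assertion that, given $k$ and a monotone counterfunction $f$, one can find $n\leq\beta(k,f)$ and an approximate fixed point $z\in B_D$ of $J$ (with precision $\frac{1}{f(n)+1}$) satisfying the approximate variational inequality $\langle u-z,y-z\rangle\leq\frac{1}{k+1}$ for all $y\in B_D$ with $\norm{J(y)-y}\leq\frac{1}{n+1}$. Your text instead \emph{applies} this proposition (``apply Proposition~\ref{lemmaprojectarg} with counterfunction $\Xi$ to produce $n^{*}$ and $z$'') and then runs the two-case dichotomy on $(s^z_n)$ with the machinery $r_1,\dots,r_4,\phi_1,\phi_2,\overline{\sigma},\Phi,\xi,\Xi,\mu$ — that is an outline of the proof of Theorem~\ref{theoremwangcui}, the paper's main result, not of the proposition. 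As a proof of the proposition it is circular, and none of its content (the iteration $(y_n)$, the sequences $s^z_n$, the monotonicity dichotomy, Lemma~\ref{LemmaLLPP}) is relevant to what has to be shown.

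A genuine proof of Proposition~\ref{lemmaprojectarg} (given in the cited source, not reproved in this paper) concerns only $u$, $J$ and the ball $B_D$: one works with the quantity $\norm{u-z}^2$ over approximate fixed points of $J$ in $B_D$ and runs a counterexample-iteration argument. Starting from a true zero $p$ (so $J(p)=p$), if the desired implication fails for the current candidate $z$, the witnessing $y$ is used to move $z$ towards $u$ (a convex combination of $z$ and $y$), which decreases $\norm{u-z}^2$ by a fixed amount of order $\frac{1}{D^2(k+1)^2}$ while firm nonexpansivity of $J$ keeps the new point an approximate fixed point with quantitatively controlled error — this is where the factors $24D(m+1)^2$ in $w_f$ and the interplay between the precision $\frac{1}{f(n)+1}$ demanded of $z$ and the threshold $\frac{1}{n+1}$ allowed for $y$ enter. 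Since $\norm{u-z}^2$ is bounded in terms of $D$, the failure can occur at most $R=4D^4(k+1)^2$ times, which yields the bound $\beta(k,f)=24D\left(w_f^{(R)}(0)+1\right)^2$. None of this structure appears in your proposal, so there is a complete gap between what you argue and what the statement requires.
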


The fact that we are working with almost fixed points instead of actual fixed points creates a new error term $P_{n}^{z}$ in the main inequalities (cf. \eqref{Desigmain} and \eqref{desigJnyn-yn} below). Since we can consider good enough almost fixed points $z$, this error $P_{n}^{z}$ can be made small so as not to affect the convergence of the algorithm.

\begin{lemma}\label{Lemmaoriginaleq10}
Let $D \in \N \setminus \{ 0 \}$ be such that $D\geq \max\{2 \norm{u-p},\norm{z_0-p} \}$, for some $p \in S$, and $c \in \N \setminus \{0\}$ satisfying $(Q_4)$. For all $n \in \N$ and  $z \in B_{D}$  we have 
\begin{equation}\label{Desigmain}
s^{z}_{n+1}\leq (1-\lambda_{n})(s^{z}_{n}+P_{n}^{z})+2\lambda_n \langle u-z, y_{n+1}-z\rangle 
\end{equation}
and
\begin{equation}\label{desigJnyn-yn}
\norm{J_{n}(y_n)-y_n}^2 \leq c\left(8D^2\lambda_n + s_{n}^z-s_{n+1}^{z}+P_{n}^{z}\right),
\end{equation}
where $s_{n}^{z}:=\norm{y_n-z}^2$, $P_{n}^{z}:=2\norm{J_{n}(z)-z}\left(3\norm{y_n-z}+\norm{J_{n}(z)-z} \right)$. 
\end{lemma}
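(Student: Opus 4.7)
The plan is to reduce both inequalities to a single auxiliary estimate which absorbs the error caused by $z$ failing to be a fixed point of $J_n$, namely
\[
\norm{J_n(y_n)-z}^2 \;\leq\; s_n^z - \norm{y_n - J_n(y_n)}^2 + P_n^z.
\]
When $z \in S$ this collapses to the standard firm‑nonexpansive estimate, since then $a_n := \norm{J_n(z)-z} = 0$ and $P_n^z = 0$. To derive it, I will expand $\norm{J_n(y_n)-z}^2 = s_n^z - 2\langle y_n - z, y_n - J_n(y_n)\rangle + \norm{y_n - J_n(y_n)}^2$, so the task reduces to showing $\langle J_n(y_n) - z, y_n - J_n(y_n)\rangle \geq -P_n^z/2$. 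This I will obtain by splitting $J_n(y_n) - z = (J_n(y_n) - J_n(z)) + (J_n(z) - z)$, using the resolvent monotonicity identity $\langle J_n(y_n) - J_n(z), (y_n - J_n(y_n)) - (z - J_n(z))\rangle \geq 0$ together with Cauchy--Schwarz and nonexpansiveness of $J_n$ on the first summand, and the bound $\norm{y_n - J_n(y_n)} \leq 2\norm{y_n - z} + a_n$ on the second; the three error contributions assemble into exactly $3\norm{y_n - z}a_n + a_n^2 = P_n^z/2$.

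With the auxiliary bound in hand, the two inequalities follow from two different convex decompositions of $y_{n+1} - z$. For \eqref{Desigmain}, writing $y_{n+1} = \lambda_n u + (1-\lambda_n) w_n$ with $w_n := (\gamma_n y_n + \delta_n J_n(y_n))/(1-\lambda_n)$ and applying Lemma~\ref{Lemmabasic}(1) to $y_{n+1} - z = (1-\lambda_n)(w_n - z) + \lambda_n(u - z)$ peels off the $2\lambda_n\langle u - z, y_{n+1} - z\rangle$ term; the Jensen bound $\norm{w_n-z}^2 \leq \frac{\gamma_n}{1-\lambda_n} s_n^z + \frac{\delta_n}{1-\lambda_n}\norm{J_n(y_n)-z}^2$ combined with the auxiliary estimate and $\delta_n,\, 1-\lambda_n \in (0,1]$ then collapses the remainder into $(1-\lambda_n)(s_n^z + P_n^z)$. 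For \eqref{desigJnyn-yn}, applying Jensen directly to the three‑term combination gives $s_{n+1}^z \leq \lambda_n\norm{u - z}^2 + \gamma_n s_n^z + \delta_n\norm{J_n(y_n) - z}^2$; substituting the auxiliary bound and isolating the negative term yields $\delta_n \norm{y_n - J_n(y_n)}^2 \leq \lambda_n\norm{u - z}^2 + s_n^z - s_{n+1}^z + \delta_n P_n^z$. The assumptions $D \geq 2\norm{u-p}$ and $z \in B_D$ give $\norm{u-z}^2 \leq (3D/2)^2 \leq 8D^2$, while $(Q_{\ref{Q3}})$ forces $\delta_n \geq 1/\sqrt{c}$; dividing by $\delta_n$ and using $1/\delta_n \leq \sqrt{c} \leq c$ then delivers the claim.

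The main obstacle is producing the auxiliary estimate with exactly the constants packaged in $P_n^z = 2a_n(3\norm{y_n - z} + a_n)$. A naive chain that bounds $\norm{J_n(y_n) - z}^2$ via the triangle inequality on $J_n(y_n) - z = (J_n(y_n) - J_n(z)) + (J_n(z)-z)$ and then invokes firm nonexpansiveness yields only $s_n^z - \norm{y_n - J_n(y_n)}^2 + 6\norm{y_n - z}a_n + 3a_n^2$, with a spare $a_n^2$ that would overshoot $P_n^z$. Routing instead through the monotonicity identity to dispose of the term $\langle J_n(y_n) - J_n(z), y_n - J_n(y_n)\rangle$ before any triangle estimate on $\norm{y_n - J_n(y_n)}$ is applied spares this extra $a_n^2$; everything else is routine algebra.
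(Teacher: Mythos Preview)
Your proof is correct and leads to both inequalities, but the route differs from the paper's in two places worth noting.

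For the auxiliary bound $\norm{J_n(y_n)-z}^2\leq s_n^z-\norm{y_n-J_n(y_n)}^2+P_n^z$, the paper proceeds exactly by the ``naive chain'' you dismissed: it applies the triangle inequality to $J_n(y_n)-z=(J_n(y_n)-J_n(z))+(J_n(z)-z)$, invokes firm nonexpansiveness, and then lower-bounds $\norm{(y_n-J_n(y_n))-(z-J_n(z))}^2$ via the reverse triangle inequality. Because the square $(\norm{y_n-J_n(y_n)}-a_n)^2$ carries a $-a_n^2$ term, the paper's count also lands on $6a_n\norm{y_n-z}+2a_n^2=P_n^z$, not on $6a_n\norm{y_n-z}+3a_n^2$; your remark that this route overshoots is therefore inaccurate. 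Your alternative via the monotonicity inequality $\langle J_n(y_n)-J_n(z),(y_n-J_n(y_n))-(z-J_n(z))\rangle\geq 0$ is of course equivalent (this identity is just a reformulation of firm nonexpansiveness) and arguably cleaner.

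For the two target inequalities, the paper does a single computation: it uses Lemma~\ref{Lemmabasic}(1) followed by the \emph{identity} Lemma~\ref{Lemmabasic}(2), which produces an extra negative term $-\delta_n(2\gamma_n+\delta_n)\norm{J_n(y_n)-y_n}^2$. Dropping it gives \eqref{Desigmain}; keeping it and using $\delta_n(2\gamma_n+\delta_n)\geq\delta_n^2\geq 1/c$ gives \eqref{desigJnyn-yn} directly, together with the bound $2\langle u-z,y_{n+1}-z\rangle\leq 8D^2$. You instead run two separate Jensen estimates and obtain a coefficient $\delta_n$ (rather than $\delta_n^2$) in front of $\norm{J_n(y_n)-y_n}^2$, compensating with $1/\delta_n\leq\sqrt{c}\leq c$. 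One small care point in your last step: after reaching $\delta_n\norm{y_n-J_n(y_n)}^2\leq 8D^2\lambda_n+s_n^z-s_{n+1}^z+P_n^z$, you should note that the right-hand side is nonnegative (since the left is) before multiplying by $1/\delta_n\leq c$; otherwise replacing $1/\delta_n$ by the larger $c$ on a possibly negative quantity $s_n^z-s_{n+1}^z$ alone would be illegitimate.
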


\begin{proof}
Let $z$ be a point in $B_{D}$. Since the resolvent is  nonexpansive we have that
\begin{equation}\label{ineqJ}
\begin{split}
\norm{y_n-J_n(y_n)- (z-J_n(z))}^2 &\geq \left(\norm{J_n(y_n)-y_n}-\norm{J_n(z)-z}\right)^2\\
&= \norm{J_n(y_n)-y_n}^2 + \norm{J_n(z)-z}\left(-2 \norm{J_n(y_n)- y_n}+\norm{J_n(z)-z}\right)\\
&\geq \norm{J_n(y_n)-y_n}^2 +\\
&\norm{J_n(z)-z} \left(-2\left(\norm{J_n(y_n)-J_n(z)} + \norm{J_n(z)-z}+ \norm{y_n- z}\right)  +\norm{J_n(z)-z}\right)\\
& =\norm{J_n(y_n)-y_n}^2  - \norm{J_n(z)-z} \left(4 \norm{y_n- z} +\norm{J_n(z)-z}\right).
\end{split}
\end{equation}

Using \eqref{ineqJ} and the fact that the resolvent is both nonexpansive and firmly nonexpansive we derive 

\begin{equation*}
\begin{split}
\norm{J_{n}(y_n)-z}^2&\leq \left( \norm{J_{n}(y_n)-J_{n}(z)}+\norm{J_{n}(z)-z}\right)^2\\
&=\norm{J_{n}(y_n)-J_{n}(z)}^2+\norm{J_{n}(z)-z}\left(2\norm{J_{n}(y_n)-J_{n}(z)}+\norm{J_{n}(z)-z} \right)\\
&\leq \norm{y_n-z}^2-\norm{y_n-J_{n}(y_n)-z+J_{n}(z)}^2+\norm{J_{n}(z)-z}\left(2\norm{y_n-z}+\norm{J_{n}(z)-z}\right)\\
& \leq \norm{y_n-z}^2-\norm{J_{n}(y_n)-y_n}^2+2\norm{J_{n}(z)-z}\left(3\norm{y_n-z}+\norm{J_{n}(z)-z}\right).
\end{split}
\end{equation*}

Then, the definition of $y_{n+1}$ and Lemma~\ref{Lemmabasic} entail
\begin{equation*}\label{ineqtoapplemma}
\begin{split}
\norm{y_{n+1}-z}^2&\leq \norm{\gamma_n(y_n-z)+\delta_n\left(J_{n}(y_n)-z\right)}^2+2\lambda_n\langle u-z,y_{n+1}-z \rangle\\
&= \gamma_n(\gamma_n+\delta_n)\norm{y_n-z}^2+\delta_n(\gamma_n+\delta_n)\norm{J_{n}(y_n)-z}^2\\
& \qquad -\gamma_n\delta_n\norm{J_{n}(y_n)-y_n}^2+2\lambda_n\langle u-z,y_{n+1}-z \rangle\\
&\leq \gamma_n(\gamma_n+\delta_n)\norm{y_n-z}^2+\delta_n(\gamma_n+\delta_n)\Big[\norm{y_n-z}^2-\norm{J_{n}(y_n)-y_n}^2 \\
& \qquad  +2\norm{J_{n}(z)-z}\left(3\norm{y_n-z}+\norm{J_{n}(z)-z}\right)\Big]\\ 
&\qquad -\gamma_n\delta_n\norm{\left(J_{n}(y_n)-y_n \right)}^2+2\lambda_n\langle u-z,y_{n+1}-z\rangle\\
&\leq (1- \lambda_n)\norm{y_n-z}^2+2\lambda_n\langle u-z,y_{n+1}-z \rangle-\delta_n(2\gamma_n+\delta_n)\norm{J_{n}(y_n)-y_n}^2\\
&\qquad +2(1-\lambda_n)\norm{J_{n}(z)-z}\left(3\norm{y_n-z}+\norm{J_{n}(z)-z} \right).
\end{split}
\end{equation*} 
We conclude that \eqref{Desigmain} holds.
Also, since $\delta_n(2\gamma_n+\delta_n) \geq \delta_n^2 \geq \frac{1}{c}$,
\begin{equation}\label{desig10}
s_{n+1}^z-s_{n}^{z}+\lambda_n s_{n}^{z}+\frac{1}{c}\norm{J_{n}(y_n)-y_n}^2\leq 2 \lambda_n\langle u-z,y_{n+1}-z\rangle+P_{n}^{z}.
\end{equation}
The inequality \eqref{desigJnyn-yn} follows from the fact that $2\langle u-z,y_{n+1}-z\rangle \leq 2 \norm{u-z}\norm{y_{n+1}-z}\leq8D^2$ .
\end{proof}

To deal with the remainder of the analysis we must discuss two cases depending on whether the sequence $(s^{z}_n)$ is decreasing or not.

\subsection{First case}

The first case that we are going to consider is the case where the sequence $(s^{z}_n)$ is eventually decreasing. 
Our goal is to apply Lemma~\ref{lemmaqtXu1} below, which is a quantitative version of Lemma~\ref{LemmaXu}, with $(v_n):=P_{n}^{z}$, $(r_{n}):= 2 \langle u-z, y_{n+1}-z\rangle$, for an adequate choice of $z$, in order to obtain a rate of metastability for $(s^{z}_n)$. The result is an easy adaptation of \cite[Lemma~14]{PP(ta)} for the case where $(\gamma_n)\equiv 0$.
\begin{lemma}\label{lemmaqtXu1}
Let $(s_n)$ be a bounded sequence of non-negative real numbers and $M\in\N$ a positive upper bound on $(s_n)$. Consider sequences of real numbers $(\lambda_n)\subset (0,1)$, $(r_n)$ and $(v_n)$ and assume the existence of a monotone function $ L$ satisfying  condition $(Q_{\ref{Q2}})$. For natural numbers $k, n$ and $q$ assume
\[\forall i\in[n,q]\, \left(v_i\leq \frac{1}{3(k+1)(q+1)}\land r_i\leq \frac{1}{3(k+1)}\right),\]
and for all $i\in\N$,
\[s_{i+1}\leq (1-\lambda_i)(s_i+v_i)+\lambda_ir_i.\]
Then
\[\forall i\in[\sigma(k,n),q]\, \left(s_i\leq \frac{1}{k+1}\right),\]
with $\sigma(k,n):=\sigma[L, M](k,n):=L\left(n+\lceil \ln(3M(k+1))\rceil\right)+1$.
\end{lemma}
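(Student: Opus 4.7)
\emph{Proof plan for Lemma~\ref{lemmaqtXu1}.} The approach is the standard quantitative analysis of the Xu-style recursion. I first rewrite the recursion as $s_{i+1}\le (1-\lambda_i)s_i + a_i$, where $a_i := (1-\lambda_i)v_i + \lambda_i r_i$, and unfold it from $n$: for any $m\ge n$,
\[
s_{m+1} \le \left(\prod_{j=n}^{m}(1-\lambda_j)\right)s_n \;+\; \sum_{i=n}^{m}\left(\prod_{j=i+1}^{m}(1-\lambda_j)\right)a_i.
\]
I will then show that each of the three contributions (the $s_n$ term, the $v_i$ part of $a_i$, and the $r_i$ part) is bounded by $\tfrac{1}{3(k+1)}$, for any $m+1 \in [\sigma(k,n),q]$.

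For the first contribution, I use $1-x\le e^{-x}$ to pass from the product to an exponential of $\sum_{j=n}^{m}\lambda_j$. Given $m+1\ge \sigma(k,n)$, i.e. $m\ge L(n+\lceil \ln(3M(k+1))\rceil)$, monotonicity of $L$ together with $(Q_{\ref{Q2}})$ gives $\sum_{j=1}^{m}\lambda_j \ge n+\ln(3M(k+1))$, and since $\lambda_j<1$ one has $\sum_{j=1}^{n-1}\lambda_j<n$, so $\sum_{j=n}^{m}\lambda_j\ge \ln(3M(k+1))$. Hence $\prod_{j=n}^{m}(1-\lambda_j)\le \tfrac{1}{3M(k+1)}$, and the bound $s_n\le M$ closes this part.

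For the $v_i$-contribution, I use $(1-\lambda_j)\le 1$ together with the hypothesis $v_i\le \tfrac{1}{3(k+1)(q+1)}$ on $[n,q]$ (and $[n,m]\subseteq [n,q]$ since $m+1\le q$) to bound this piece by $(m-n+1)\cdot \tfrac{1}{3(k+1)(q+1)}\le \tfrac{1}{3(k+1)}$. For the $r_i$-contribution, the key trick is the telescoping identity
\[
\sum_{i=n}^{m}\left(\prod_{j=i+1}^{m}(1-\lambda_j)\right)\lambda_i \;=\; 1 - \prod_{j=n}^{m}(1-\lambda_j) \;\le\; 1,
\]
which, combined with $r_i\le \tfrac{1}{3(k+1)}$ on $[n,q]$, yields the last bound $\tfrac{1}{3(k+1)}$.

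Summing the three pieces gives $s_{m+1}\le \tfrac{1}{k+1}$ for every $m+1\in[\sigma(k,n),q]$, which is the desired conclusion. I do not anticipate a real obstacle; the only delicate points are the bookkeeping with the parameter $n$ when deriving $\sum_{j=n}^{m}\lambda_j\ge \ln(3M(k+1))$ from $(Q_{\ref{Q2}})$ and the range check $[n,m]\subseteq [n,q]$ that lets me apply the hypotheses on $v_i$ and $r_i$ pointwise inside the sum.
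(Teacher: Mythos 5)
Your proof is correct. The paper does not prove this lemma itself — it only remarks that it is an easy adaptation of a quantitative Xu-type lemma from the cited reference (with $(\gamma_n)\equiv 0$) — and your argument (unfolding the recursion, the telescoping identity for $\sum\lambda_i\prod(1-\lambda_j)$, and $1-x\leq e^{-x}$ combined with $(Q_2)$ and $\lambda_j<1$ to absorb the first $n$ terms, noting that $\sigma(k,n)>n$ so the unfolding from $n$ applies) is exactly the standard proof underlying that cited result, so it supplies the omitted details rather than taking a genuinely different route.
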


\begin{remark}\label{remarksigma}
Observe that since $\lambda_n \leq 1$, for all $n \in \N$, by $(Q_3)$ it follows that for all $n \in \N$ we have $L(n) \geq n$. Hence the function $\sigma$ defined in Lemma~\ref{lemmaqtXu1} verifies the condition $\sigma(k,n) \geq n$, for all $n \in \N$.
\end{remark}
 In the original proof of Theorem~\ref{ThmWangCui}, metric projection is used, followed by a sequential weak compactness argument.  Sequential weak compactness can be eliminated in a way similar to \cite{DP(ta),PP(ta)}. This is to be expected in light of the arguments given in \cite{FFLLPP(19)}. The next result is an easy adaptation of \cite[Proposition~2.27]{K(08)} (see also Remark~2.29 in the same reference).

\begin{lemma}\label{lemmaJyi}
	Let $D \in \N \setminus \{ 0 \}$ be such that $D\geq \max\{2 \norm{u-p},\norm{z_0-p} \}$, for some $p \in S$. For $k,n \in \N$, $f:\N \to \N$ monotone and $z \in B_{D}$, if
	\begin{equation*}
	\forall i \in [n, \phi_1(k,n,f)] \left(s_{i+1}^{z} \leq s_{i}^{z} \right),
	\end{equation*}
	then there exists $n' \leq \phi_2 (k,n,f)$ such that
	\begin{equation}\label{eqCauchymeta}
	 \forall i,j \in [n',f(n')] \left(\left|s_{i}^{z}-s_{j}^{z}\right|\leq \frac{1}{k+1}\right),
	\end{equation}
	where $\phi_1(k,n,f):=\phi_1[D](k,n,f):=f(\phi_2(k,n,f))$ and $\phi_2(k,n,f):=\phi_2[D](k,n,f):= \max\{n, f^{(4D^2(k+1))}(n)\}$.
	
	Moreover, there is $n'\in \{f^{(i)}(n): i \leq 4D^2(k+1)\}$ satisfying \eqref{eqCauchymeta}.
\end{lemma}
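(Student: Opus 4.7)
The plan is to combine the boundedness of the sequence $(s_i^z)$ with a pigeonhole/telescoping argument along iterates of $f$. First, I would note that $s_i^z \leq 4D^2$ for every $i \in \N$: since $D \geq \max\{\norm{u-p},\norm{z_0-p}\}$, Lemma~\ref{aux_bounds}\eqref{raiodabola} applied with $d_0 = D$ yields $\norm{y_i - p} \leq D$, and together with $\norm{z-p} \leq D$ (as $z \in B_D$) this gives $\norm{y_i - z} \leq 2D$, hence $s_i^z \leq 4D^2$.

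Set $N := 4D^2(k+1)$ and consider the iterates $n_i := f^{(i)}(n)$ for $i = 0, 1, \ldots, N$. I would argue by contradiction: suppose no element of $\{f^{(i)}(n) : i \leq N\}$ which is bounded by $\phi_2(k,n,f)$ witnesses \eqref{eqCauchymeta}. Then, in particular, for each $i < N$ the interval $[n_i, f(n_i)]$ must be non-empty (else $n' = n_i$ would already satisfy \eqref{eqCauchymeta} vacuously), so $f(n_i) \geq n_i$, and the chain $(n_i)$ is non-decreasing. By monotonicity of $f$, $n_i = f^{(i)}(n) \leq f^{(N)}(n) \leq \phi_2(k,n,f)$, so $[n_i, f(n_i)] \subseteq [n, \phi_1(k,n,f)]$, and the decreasing hypothesis of the lemma applies on this interval. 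By assumption, there exist $a_i, b_i \in [n_i, f(n_i)]$ with $|s_{a_i}^z - s_{b_i}^z| > \frac{1}{k+1}$, and since $(s_j^z)$ is non-increasing on this interval, the maximum of such a difference is attained at the endpoints, yielding $s_{n_i}^z - s_{n_{i+1}}^z > \frac{1}{k+1}$.

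Summing over $i = 0, 1, \ldots, N-1$, the telescoping sum gives
\begin{equation*}
s_{n_0}^z - s_{n_N}^z \;=\; \sum_{i=0}^{N-1} \left(s_{n_i}^z - s_{n_{i+1}}^z\right) \;>\; \frac{N}{k+1} \;=\; 4D^2,
\end{equation*}
contradicting $s_{n_0}^z \leq 4D^2$ and $s_{n_N}^z \geq 0$. Therefore some $i \leq N$ produces an $n' := f^{(i)}(n)$ witnessing \eqref{eqCauchymeta} with $n' \leq \phi_2(k,n,f)$, which also establishes the \emph{moreover} clause.

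The main subtlety is the handling of $f$ with $f(m) < m$: monotonicity of $f$ in the sense used here does not entail $f(m) \geq m$. However, this causes no real trouble, since whenever $f(n_i) < n_i$ the interval $[n_i, f(n_i)]$ is empty and \eqref{eqCauchymeta} holds vacuously at $n' = n_i$; thus we may safely assume throughout the contradiction argument that the chain $(n_i)$ is non-decreasing, which is what keeps the iterates inside $[n, \phi_2(k,n,f)]$ and allows the decreasing hypothesis to be invoked on each $[n_i, f(n_i)]$.
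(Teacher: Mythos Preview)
Your argument is correct and is precisely the standard finite pigeonhole/telescoping argument for metastability of a bounded monotone sequence; the paper does not supply its own proof but merely points to \cite[Proposition~2.27]{K(08)}, which proceeds in exactly this way. One small remark on your final paragraph: since $f$ is monotone, once $f(n)\geq n$ the inequality $f(n_i)\geq n_i$ propagates to all $i$ (from $n_i\leq f(n_i)=n_{i+1}$ and monotonicity, $f(n_{i+1})\geq f(n_i)=n_{i+1}$), so the only place the vacuous case can occur is at $i=0$, where $n_0=n\leq\phi_2$ automatically---this removes the apparent circularity between ``$n_i\leq\phi_2$'' and ``$n_i$ fails to witness'' in your contradiction hypothesis.
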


We will need the following particular instance of Lemma~\ref{lemmaJyi}.
\begin{lemma}\label{lemmametadifference}
		Let $D \in \N \setminus \{ 0 \}$ be such that $D\geq \max\{2 \norm{u-p},\norm{z_0-p} \}$, for some $p \in S$. For $k,n \in \N$, $f:\N \to \N$ monotone and $z \in B_{D}$, if
	\begin{equation*}
	\forall i \in [n, \phi_1(k,n,f+1)] \left(s_{i+1}^{z} \leq s_{i}^{z} \right),
	\end{equation*}
	then there exists $n' \leq \phi_2 (k,n,f+1)$ such that
	\begin{equation}\label{eqCauchymeta2}
	\forall i \in [n',f(n')] \left(s_{i}^{z}-s_{i+1}^{z}\leq \frac{1}{k+1}\right),
	\end{equation}
	where $\phi_1$, $\phi_2$ are as in Lemma~\ref{lemmaJyi}.

	Moreover, there is $n' \in \{(f+1)^{(i)}(n): i \leq 4D^2(k+1)\}$ satisfying \eqref{eqCauchymeta2}.
\end{lemma}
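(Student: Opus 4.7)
The plan is to deduce Lemma~\ref{lemmametadifference} as a direct instance of Lemma~\ref{lemmaJyi} by simply replacing the monotone function $f$ with $f+1$. The hypothesis of Lemma~\ref{lemmametadifference} was tailored exactly so that the hypothesis of Lemma~\ref{lemmaJyi} holds for $f+1$: we are given that $s_{i+1}^{z}\leq s_{i}^{z}$ for all $i\in[n,\phi_1(k,n,f+1)]$, which is the premise of Lemma~\ref{lemmaJyi} with the function $f+1$ in place of $f$.

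Applying Lemma~\ref{lemmaJyi} therefore yields some $n'\leq \phi_2(k,n,f+1)$, with moreover $n'\in\{(f+1)^{(i)}(n):i\leq 4D^{2}(k+1)\}$, such that
\[\forall i,j\in[n',(f+1)(n')]\,\left(\bigl|s_{i}^{z}-s_{j}^{z}\bigr|\leq \tfrac{1}{k+1}\right).\]
To obtain \eqref{eqCauchymeta2} I would then specialize the pair $(i,j)$ to $(i,i+1)$: for any $i\in[n',f(n')]$ one has $i,i+1\in[n',f(n')+1]=[n',(f+1)(n')]$, so the preceding inequality gives $s_{i}^{z}-s_{i+1}^{z}\leq \bigl|s_{i}^{z}-s_{i+1}^{z}\bigr|\leq \frac{1}{k+1}$.

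The only thing to double-check is the interval bookkeeping, namely that the "almost decreasing" hypothesis is available for all indices used when we invoke Lemma~\ref{lemmaJyi}: this is immediate because we assumed it on $[n,\phi_1(k,n,f+1)]$, which is exactly the range Lemma~\ref{lemmaJyi} requires for the function $f+1$. I do not expect any genuine obstacle; this is a bookkeeping corollary whose sole purpose is to convert the general two-sided Cauchy conclusion of Lemma~\ref{lemmaJyi} into the one-sided consecutive-difference form that will be convenient later in the quantitative analysis (in particular when controlling the residual $s_{n}^{z}-s_{n+1}^{z}$ appearing in \eqref{desigJnyn-yn}).
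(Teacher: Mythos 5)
Your proposal is correct and takes essentially the same route as the paper: apply Lemma~\ref{lemmaJyi} with the monotone function $f+1$ in place of $f$ and then specialize the resulting two-sided estimate on $[n',(f+1)(n')]=[n',f(n')+1]$ to the pairs $(i,i+1)$. The paper additionally reduces to the case $f(n)\geq n$ and checks $[n',f(n')]\subseteq[n,\phi_1(k,n,f+1)]$ in order to note that the consecutive differences are nonnegative, but since the stated conclusion is only the upper bound $s_{i}^{z}-s_{i+1}^{z}\leq \frac{1}{k+1}$, your shorter argument via $s_{i}^{z}-s_{i+1}^{z}\leq\left|s_{i}^{z}-s_{i+1}^{z}\right|$ already suffices.
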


\begin{proof}
	We may assume that $f(n) \geq n$, because otherwise the result is trivial.
	By Lemma~\ref{lemmaJyi} we have that $$\forall i,j \in [n', f(n')+1] \left(\left|s_{i}^{z}-s_j^{z} \right|\leq \frac{1}{k+1} \right),$$ with $n' \in \{(f+1)^{(i)}(n): i \leq 4D^2(k+1)\}$. If $i \in [n',f(n')]$, then $i+1 \in [n', f(n')+1]$ and so $\left|s_{i}^{z}-s_{i+1}^{z} \right|\leq \frac{1}{k+1} $.
	Since $n' \geq n $ and $f(n')\leq f(\phi_2(k,n,f+1)) \leq \phi_1(k,n,f+1)$, using the monotonicity of the function $f$ we have $[n',f(n')] \subseteq [n,\phi_1(k,n,f+1)]$. Then for $i \in [n',f(n')]$ it holds that $s_{i}^{z}-s_{i+1}^{z} \geq 0$ which entails the result.
\end{proof}

In the discussion of the first case we need a quantitative version of the fact that $(y_n)$ is a sequence of almost fixed points for the resolvent function. This is accomplished with Lemmas~\ref{linnerproduct0} and \ref{lemma_g}.

\begin{lemma}\label{linnerproduct0}
	Let $D \in \N \setminus \{ 0 \}$ be such that $D\geq \max\{2 \norm{u-p},\norm{z_0-p} \}$, for some $p \in S$, and $c\in \N \setminus \{0\} $ satisfying $(Q_4)$. For $m,n \in \N$, $f:\N \to \N$ monotone and $z \in B_{D}$, if $n \geq \ell(96cD^2(m+1)^2-1)$ and
	\begin{equation*}
	 \forall i \in [n, f(n)] \left(s_{i}^{z}-s_{i+1}^{z}\leq \frac{1}{12c(m+1)^2} \wedge P_{i}^{z}\leq \frac{1}{12c(m+1)^2} \right),
	\end{equation*}
	then 
	\begin{equation*}
	\forall i \in [n, f(n)] \left(\norm{J(y_i)-y_i}\leq \frac{1}{m+1} \right).
	\end{equation*}
\end{lemma}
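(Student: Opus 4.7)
The plan is to start from inequality \eqref{desigJnyn-yn} of Lemma~\ref{Lemmaoriginaleq10}, which already reads
\[
\norm{J_i(y_i)-y_i}^2 \leq c\bigl(8D^2\lambda_i + s_i^z - s_{i+1}^z + P_i^z\bigr),
\]
for every $i\in\N$ and every $z\in B_D$. The task is then to bound each of the three terms on the right by $\frac{1}{12c(m+1)^2}$, so that together they sum to $\frac{1}{4c(m+1)^2}$, yielding $\norm{J_i(y_i)-y_i}\leq \frac{1}{2(m+1)}$ after multiplying by $c$ and taking square roots.

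The hypothesis directly supplies two of the three bounds: for $i\in[n,f(n)]$ we have $s_i^z-s_{i+1}^z\leq \frac{1}{12c(m+1)^2}$ and $P_i^z\leq \frac{1}{12c(m+1)^2}$. For the term $8D^2\lambda_i$, I would invoke condition $(Q_{\ref{Q1}})$: since $i\geq n\geq \ell(96cD^2(m+1)^2-1)$ and $\ell$ is a rate of convergence for $(\lambda_n)$, one gets $\lambda_i \leq \frac{1}{96cD^2(m+1)^2}$, hence $8D^2\lambda_i \leq \frac{1}{12c(m+1)^2}$ exactly as required.

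Finally, I would pass from $J_i=J_{c_i}$ to $J=J_{1/c}$ using Lemma~\ref{lemmaresolvineq}. Condition $(Q_4)$ gives $c_i\geq \frac{1}{c}$, so taking $a=\frac{1}{c}$ and $b=c_i$ in that lemma yields
\[
\norm{J(y_i)-y_i} \leq 2\norm{J_i(y_i)-y_i} \leq \frac{1}{m+1},
\]
which is the desired conclusion. There is no genuine obstacle here; the lemma is essentially a bookkeeping step that turns the inequality \eqref{desigJnyn-yn} into the quantitative almost-fixed-point statement needed for the main analysis, with the only care required being the choice of the constants $96cD^2$ and $12c$ so that the factor $c$ in \eqref{desigJnyn-yn} and the factor $2$ coming from Lemma~\ref{lemmaresolvineq} are absorbed.
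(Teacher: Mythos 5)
Your proposal is correct and follows essentially the same route as the paper: apply inequality \eqref{desigJnyn-yn}, use $(Q_2)$ with $i\geq n\geq \ell(96cD^2(m+1)^2-1)$ to bound $8D^2\lambda_i$ by $\frac{1}{12c(m+1)^2}$, combine with the two hypothesised bounds to get $\norm{J_i(y_i)-y_i}\leq\frac{1}{2(m+1)}$, and finish with Lemma~\ref{lemmaresolvineq} since $c_i\geq\frac{1}{c}$ by $(Q_4)$. No gaps.
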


\begin{proof}
	For $i\in [n, f(n)]$, we have $i\geq n\geq \ell(96cD^2(m+1)^2-1)$. Hence, by condition (Q$_{\ref{Q1}}$),
	\begin{equation*}
	\lambda_{i}\leq \frac{1}{96cD^2(m+1)^{2}}.
	\end{equation*}
	By inequality \eqref{desigJnyn-yn}
	\begin{equation*}
	\norm{J_{i}(y_i)-y_i}^2 \leq c\left(\frac{8D^2}{96cD^2(m+1)^{2}} + \frac{1}{12c(m+1)^2}+\frac{1}{12c(m+1)^2}\right) = \frac{1}{4(m+1)^2}.
	\end{equation*}
	Hence
	\begin{equation*}
	\forall i \in [n,f(n)] \left(\norm{J_{i}(y_i)-y_i} \leq \frac{1}{2(m+1)} \right),
	\end{equation*}
	and the result follows by Lemma~\ref{lemmaresolvineq}.
\end{proof}

\begin{lemma}\label{lemma_g}
	Let $D \in \N \setminus \{ 0 \}$ be such that $D\geq \max\{2 \norm{u-p},\norm{z_0-p} \}$, for some $p \in S$, and $c\in \N \setminus \{0\} $ satisfying $(Q_4)$. For $m,n \in \N$, $f:\N \to \N$ monotone and $z \in B_{D}$, if  $n \geq \ell ((r_1+1)8D^2-1)$ and
	\begin{equation*}
	\forall i \in [n,\phi_1(r_1,n,f+1)] \left( s_{i+1}^{z }\leq s_i^{z} \wedge P_i^{z} \leq \frac{1}{r_1+1}\right),
	\end{equation*}
	then
	\begin{equation*}
	\exists n' \leq \phi_2 (r_1,n,f+1)\, \forall i \in [n',f(n')] \left(\norm{J(y_i)-y_i}\leq \frac{1}{m+1}\right),
	\end{equation*}
	where $r_1:=r_1(m)=12c(m+1)^2-1$, as in Definition~\ref{definitionfunctions1}.
\end{lemma}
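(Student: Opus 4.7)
The plan is to chain Lemma~\ref{lemmametadifference} with Lemma~\ref{linnerproduct0}. The former upgrades the hypothesised ``metastable monotonicity'' of $(s_i^z)$ into an actual smallness bound on the one-step differences $s_i^z - s_{i+1}^z$ on a suitable sub-interval, while the latter then combines such smallness with the assumed smallness of $P_i^z$ (and the rate $\ell$ for $\lambda_n \to 0$) to deliver smallness of $\|J(y_i) - y_i\|$. All that remains is to match constants.

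First I would apply Lemma~\ref{lemmametadifference} with parameter $k := r_1 = 12c(m+1)^2 - 1$. The hypothesis $\forall i \in [n, \phi_1(r_1,n,f+1)]\,(s_{i+1}^z \leq s_i^z)$ is exactly what that lemma requires, so it produces some $n' \leq \phi_2(r_1,n,f+1)$ satisfying $s_i^z - s_{i+1}^z \leq 1/(r_1+1) = 1/(12c(m+1)^2)$ for all $i \in [n',f(n')]$. I may assume WLOG that $f(n) \geq n$ (otherwise $[n',f(n')]$ collapses and the statement is vacuous). The ``moreover'' clause of Lemma~\ref{lemmametadifference}, together with the fact that $(f+1)^{(i)}(n) \geq n$ under this WLOG, then gives $n' \geq n$. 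Using monotonicity of $f$ and the identity $\phi_1(r_1,n,f+1) = (f+1)(\phi_2(r_1,n,f+1)) = f(\phi_2(r_1,n,f+1)) + 1$, I get $f(n') \leq \phi_1(r_1,n,f+1) - 1$, so $[n',f(n')] \subseteq [n, \phi_1(r_1,n,f+1)]$.

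From this inclusion, the hypothesis $P_i^z \leq 1/(r_1+1) = 1/(12c(m+1)^2)$ also holds throughout $[n',f(n')]$. Furthermore, $n' \geq n \geq \ell((r_1+1)8D^2 - 1) = \ell(96cD^2(m+1)^2 - 1)$, which is precisely the threshold required by Lemma~\ref{linnerproduct0}. Applying that lemma (with $n'$ and $f(n')$ in place of $n$ and $f(n)$) immediately yields $\|J(y_i) - y_i\| \leq 1/(m+1)$ for every $i \in [n',f(n')]$, which is the desired conclusion, with $n' \leq \phi_2(r_1,n,f+1)$.

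The only real obstacle is bookkeeping: verifying that the constants $r_1 + 1 = 12c(m+1)^2$ and $(r_1+1)8D^2 - 1 = 96cD^2(m+1)^2 - 1$ match exactly the ones appearing in Lemma~\ref{linnerproduct0}, and that the sub-interval produced by Lemma~\ref{lemmametadifference} stays inside the interval on which the hypothesis on $P_i^z$ is available. No new analytic input is needed beyond these two previous lemmas.
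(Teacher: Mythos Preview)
Your proposal is correct and follows essentially the same approach as the paper: apply Lemma~\ref{lemmametadifference} with $k=r_1$ to obtain $n'$ and the bound on $s_i^z-s_{i+1}^z$, verify $[n',f(n')]\subseteq[n,\phi_1(r_1,n,f+1)]$ and $n'\geq \ell(96cD^2(m+1)^2-1)$, then feed into Lemma~\ref{linnerproduct0}. Your justification of $n'\geq n$ via the ``moreover'' clause is slightly more explicit than the paper's, but the argument is identical.
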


\begin{proof}
	We may assume that $f(n) \geq n$, because otherwise the result is trivial. By Lemma~\ref{lemmametadifference}, there exists $n' \in [n,\phi_{2}(r_1,n,f+1)]$ such that
	\begin{equation*}
	\forall i \in [n', f(n')] \left(s_{i}^{z}-s_{i+1}^{z} \leq \frac{1}{12c(m+1)^2} \right).
	\end{equation*}
	Since $n'\geq n$ and $f(n')\leq \phi_1(r_1, n, f+1)$ we have that $n' \geq \ell((r_1+1)8D^2-1)$ and $[n',f(n')]\subseteq [n, \phi_1(r_1,n,f+1)],$ which implies that $n' \geq \ell (96cD^2(m+1)^2-1)$ and 
	\begin{equation*}
	\forall i \in [n',f(n')] \left(s_{i}^{z}-s_{i+1}^{z}\leq \frac{1}{r_1+1} \wedge P_{i}^{z}\leq \frac{1}{r_1+1} \right).
	\end{equation*}
	Hence, by Lemma~\ref{linnerproduct0},	
\begin{equation*}
	\forall i \in [n', f(n')] \left(\norm{J(y_i)-y_i}\leq \frac{1}{m+1} \right).\qedhere
	\end{equation*}
\end{proof}

The analysis of the first case is concluded with the following result. It gives a rate of metastability for the convergence of the sequence $(s_n^{z})$ provided that $z$ is a suficiently good approximation to the projection point and that the decreasing property of the sequence $(s_n^{z})$ holds long enough.

\begin{lemma}\label{maincase1}
	Let $D \in \N \setminus \{ 0 \}$ be such that $D\geq \max\{2 \norm{u-p},\norm{z_0-p} \}$, for some $p \in S$. For $k,m \in \N$, $f:\N \to \N$ monotone and $z \in B_{D}$, assume that
	\begin{enumerate}[$(i)$]
	\item\label{lc13} $\forall i\in [r_3, \phi_1(r_1, r_3, \widetilde{f}+2)]\, \left( s_{i+1}^z\leq s_i^z \right)$,
	\item\label{lc11} $\norm{J(z)-z}\leq \dfrac{1}{\Xi_1(m)+1}$,
	\item\label{lc12} $\forall y\in B_D\, \left( \norm{J(y)-y}\leq \frac{1}{m+1}\to \langle u-z, y-z\rangle \leq \frac{1}{6(k+1)} \right)$.
	\end{enumerate}
	Then
	\begin{equation*}
	\exists n\leq \overline{\sigma}(k,\phi_2(r_1,r_3,\widetilde{f}+2))\, \forall i\in[n, f(n)]\, \left(s_i^z\leq \frac{1}{k+1} \right),
	\end{equation*}
where  $\overline{\sigma}$, $\widetilde{f}$, $r_1:=r_1(m)$, $r_3:=r_3(m)$, $\phi_1$ and $\phi_2$ are as in Definition~\ref{definitionfunctions1} and $\Xi_1$ is as Definition~\ref{definitionfunctions2}.
\end{lemma}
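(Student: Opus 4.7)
The plan is to apply the quantitative Xu lemma (Lemma~\ref{lemmaqtXu1}) to the recursion given in Lemma~\ref{Lemmaoriginaleq10}, after upgrading hypothesis~(i) into smallness of $\|J(y_i)-y_i\|$ via Lemma~\ref{lemma_g}, and then using~(iii) to force the inner product terms to be small.

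\emph{Step 1: control of the error term $P_i^z$.} From the definition of $\Xi_1$ together with Lemma~\ref{samefix-pt-set}, hypothesis~(ii) gives
\[
\forall i\leq \phi_1(r_1,r_3,\widetilde{f}+2)\, \left(\norm{J_i(z)-z}\leq \frac{1}{2(6D+1)\max\{r_1,r_4\}}\right).
\]
Since $z,p\in B_D$ and $\norm{y_i-p}\leq D$ by Lemma~\ref{aux_bounds}, we have $\norm{y_i-z}\leq 2D$. Plugging into the definition of $P_i^z$ gives, on the same range, $P_i^z\leq \frac{1}{\max\{r_1,r_4\}+1}$, hence $P_i^z\leq \frac{1}{r_1+1}$ and $P_i^z\leq \frac{1}{r_4+1}$.

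\emph{Step 2: smallness of $\|J(y_i)-y_i\|$.} The definition of $r_3$ ensures $r_3\geq \ell((r_1+1)8D^2-1)$. Combining this with hypothesis~(i) and the bound $P_i^z\leq \frac{1}{r_1+1}$ from Step~1, we apply Lemma~\ref{lemma_g} at starting point $r_3$ and with $f$ replaced by $\widetilde{f}+1$ to obtain
\[
\exists n'\leq \phi_2(r_1,r_3,\widetilde{f}+2)\, \forall i\in[n',\widetilde{f}(n')+1]\, \left(\norm{J(y_i)-y_i}\leq \frac{1}{m+1}\right).
\]
Since $y_i\in B_D$, hypothesis~(iii) then gives $\langle u-z,y_i-z\rangle\leq \frac{1}{6(k+1)}$ on the same range, so in particular the residual $r_i:=2\langle u-z,y_{i+1}-z\rangle\leq \frac{1}{3(k+1)}$ for all $i\in[n',\widetilde{f}(n')]$.

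\emph{Step 3: application of Lemma~\ref{lemmaqtXu1}.} By Lemma~\ref{Lemmaoriginaleq10}, the sequence $(s_i^z)$ satisfies $s_{i+1}^z\leq (1-\lambda_i)(s_i^z+v_i)+\lambda_i r_i$ with $v_i:=P_i^z$ and the $r_i$ above; it is bounded by $4D^2$. Set $q:=\widetilde{f}(n')$. From $n'\leq \phi_2(r_1,r_3,\widetilde{f}+2)$ and the monotonicity of $\widetilde{f}$, the definition of $r_4$ yields $r_4+1\geq 3(k+1)(q+1)$, whence $v_i\leq \frac{1}{3(k+1)(q+1)}$ on $[n',q]$ by Step~1. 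Lemma~\ref{lemmaqtXu1}, instantiated with $L$ and upper bound $4D^2$ (so that $\sigma=\overline{\sigma}$), then gives $s_i^z\leq \frac{1}{k+1}$ for all $i\in[\overline{\sigma}(k,n'),q]$. Finally, setting $n:=\overline{\sigma}(k,n')\leq \overline{\sigma}(k,\phi_2(r_1,r_3,\widetilde{f}+2))$, the identity $f(n)=f(\overline{\sigma}(k,n'))=\widetilde{f}(n')=q$ yields the desired interval $[n,f(n)]$.

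The main obstacle is not conceptual but bookkeeping: one must verify that each of the nested bounds ($r_1,r_3,r_4,\phi_1,\phi_2,\overline{\sigma},\widetilde{f}$) is chosen precisely so that the three smallness requirements of Lemma~\ref{lemmaqtXu1} line up on the same interval $[n',q]$, and that the endpoint $q=\widetilde{f}(n')$ is exactly $f$ applied to the final output $n=\overline{\sigma}(k,n')$.
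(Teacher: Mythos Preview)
Your proof sketch is correct and follows essentially the same approach as the paper: bound $P_i^z$ via hypothesis~(ii) and Lemma~\ref{samefix-pt-set}, feed this together with hypothesis~(i) into Lemma~\ref{lemma_g} (with counter function $\widetilde{f}+1$) to obtain an $n'$ on which $\|J(y_i)-y_i\|$ is small, use hypothesis~(iii) to control the inner product, and close with Lemma~\ref{lemmaqtXu1} at $q=\widetilde{f}(n')$. The only discrepancies are cosmetic off-by-ones (e.g.\ the computation in Step~1 actually yields $P_i^z\leq \tfrac{1}{\max\{r_1,r_4\}}$ rather than $\tfrac{1}{\max\{r_1,r_4\}+1}$, and likewise $r_4\geq 3(k+1)(q+1)$ rather than $r_4+1$), which do not affect the argument.
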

\begin{proof}
Let $r_4:=r_4(m)$ be as in Definition~\ref{definitionfunctions1}. By \eqref{lc11}, using Lemma~\ref{samefix-pt-set} and the definition of $\Xi_1$, we have
	\begin{equation*}
	\forall i\leq \phi_1(r_1,r_3,\widetilde{f}+2)\, \left(\norm{J_i(z)-z}\leq \frac{1}{2(6D+1)\max \{r_1,r_4\}} \right).
	\end{equation*}
	Noticing that $\norm{J_i(z)-z}\leq 1$ and $\norm{y_i-z}\leq 2D$, for $i\leq \phi_1(r_1,r_3,\widetilde{f}+2)$, we have
	\begin{equation}\label{eqr1r2}
	P_{i}^{z}=2\norm{J_{i}(z)-z}\left(3\norm{y_i-z}+\norm{J_{i}(z)-z} \right)\leq  \frac{2(6D+1)}{2(6D+1)\max \{r_1,r_4\}}=\frac{1}{\max \{r_1,r_4\}}.
	\end{equation}	
By	\eqref{eqr1r2}, \eqref{lc13} and the fact that $r_3 \geq \ell(96cD^2(m+1)^2-1)$, it follows from Lemma~\ref{lemma_g} that 
\begin{equation}\label{eqJyim}
\exists n \leq \phi_2(r_1,r_3,\widetilde{f}+2) \, \forall i \in [n,\widetilde{f}(n)+1] \left(\norm{J(y_i)-y_i}\leq \frac{1}{m+1} \right).
\end{equation} 
For $i \in [n, \widetilde{f}(n)]$, we have that $i+1 \in [n,\widetilde{f}(n)+1]$. Hence $\norm{J(y_{i+1})-y_{i+1}}\leq \frac{1}{m+1}$. It follows from \eqref{lc12} and the fact that $y_{i+1} \in B_{D}$ that
\begin{equation}\label{eqinnerproduct}
\exists n \leq \phi_2(r_1,r_3,\widetilde{f}+2) \, \forall i \in [n, \widetilde{f}(n)] \left(\langle u-z, y_{i+1}-z \rangle \leq \frac{1}{6(k+1)}\right).
\end{equation}
Since $\widetilde{f}(n)\leq \widetilde{f}(\phi_2(r_1,r_3,\widetilde{f}+2))$, we have $r_4 \geq 3(k+1)(\widetilde{f}(n)+1)$. Since $\widetilde{f}(n)\leq  \phi_1(r_1,r_3,\widetilde{f}+2)$, by \eqref{eqr1r2} and \eqref{eqinnerproduct} it follows that 
\begin{equation}\label{eqinnerproductPsmall}
\exists n \leq \phi_2(r_1,r_3,\widetilde{f}+2) \, \forall i \in [n, \widetilde{f}(n)] \left(P_{i}^{z}\leq \frac{1}{3(k+1)(\widetilde{f}(n)+1)} \wedge \langle u-z, y_{i+1}-z \rangle \leq \frac{1}{6(k+1)}\right).
\end{equation}
Then from \eqref{eqinnerproductPsmall}, by applying Lemma~\ref{lemmaqtXu1} with $q=\widetilde{f}(n):=f(\overline{\sigma}(k,n))$ and using inequality \eqref{Desigmain} we conclude
\begin{equation*}
	\exists n\leq \phi_2(r_1,r_3,\widetilde{f}+2)\, \forall i\in[\overline{\sigma}(k,n), f(\overline{\sigma}(k,n))]\, \left(s_i^z\leq \frac{1}{k+1} \right),
	\end{equation*}
which entails the result.
\end{proof}

\subsection{Second case}
We are now going to consider the case where the sequence $(s^{z}_n)$ is not eventually decreasing. 

For $s:\N \to \N$ and $m \in \N$ we define a functional $\tau$ as follows.
\begin{equation*}
\tau^{s}_{m}(n):=
\begin{cases} n & n<m\\   
				\max\{ k \in [m,n]: s_{k}<s_{k+1}\} & n \geq m \wedge\exists k \in [m,n] \left(s_{k}<s_{k+1}\right)  \\
			    n   & n \geq m \wedge  \forall k \in [m,n] \left(s_{k+1}\leq s_{k}\right) . 
\end{cases}
\end{equation*}

\begin{remark}
Given $s:\N \to \N$ and $m \in \N$, the definition of $\tau^{s}_{m}$ implies immediately that for all $i \in \N$ it holds that $\tau^{s}_{m}(i) \leq i$. Moreover, $\tau^{s}_{m}(m)=m$ and for $n \in \N$, if $s_{j+1} \leq s_{j}$, for all $j \in [m,n]$, then $\tau^{s}_{m}(i) =i$, for all $i\leq n$.
\end{remark}
The following proposition shows that the functional $\tau^{s}_{m}$ is monotone in  $m$ and $n$, respectively.

\begin{proposition}\label{proptau}
Let $s:\N \to \N$ and $m \in \N$. The functional $\tau^{s}_{m}$ enjoys the following properties
\begin{enumerate}[$(i)$]
\item\label{tau3}  $\forall i \in \N \left(\tau^{s}_{m}(i) \leq \tau^{s}_{m}(i+1) \right)$.
\item\label{tau4} $\forall i \in \N \left(\tau^{s}_{m}(i) \leq \tau^{s}_{m+1}(i) \right)$.
\end{enumerate}
\end{proposition}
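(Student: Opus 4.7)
The plan is to prove both statements by a direct case analysis on the three branches of the piecewise definition of $\tau^{s}_{m}$. Both parts hinge on the same elementary observation: enlarging the interval over which a maximum is taken can only increase the value of the max (when it is defined), and passing from the ``no witness'' branch to the ``witness exists'' branch only increases the output further.

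For part \eqref{tau3}, I would fix $s$ and $m$ and split on the position of $i$ relative to $m$. If $i+1<m$, both sides equal their arguments by the first branch, so $\tau^{s}_{m}(i)=i<i+1=\tau^{s}_{m}(i+1)$. If $i+1=m$, then $\tau^{s}_{m}(i)=i=m-1$ and $\tau^{s}_{m}(m)=m$ directly from the definition (both the second and third branch produce $m$ at $n=m$). For $i\geq m$ the core case, I would use that $[m,i]\subseteq[m,i+1]$: if the second branch applies at $i$, then it also applies at $i+1$ and the maximum over the larger interval dominates. If the second branch fails at $i$ but applies at $i+1$, then the witness must be $k=i+1$, giving $\tau^{s}_{m}(i+1)=i+1>i=\tau^{s}_{m}(i)$. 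If the second branch fails at both, both outputs are their respective arguments.

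For part \eqref{tau4}, I would fix $s$ and $i$ and split on $i$ relative to $m$. For $i\leq m$ both sides evaluate to $i$ directly. For $i\geq m+1$, I would compare the sets $W_{m}:=\{k\in[m,i]:s_{k}<s_{k+1}\}$ and $W_{m+1}:=\{k\in[m+1,i]:s_{k}<s_{k+1}\}=W_{m}\setminus\{m\}$. If $W_{m+1}\neq\emptyset$, then its maximum equals $\max W_{m}$ (any element of $W_{m+1}$ is already $\geq m+1>m$), yielding equality of the two $\tau$-values by the second branch. If $W_{m+1}=\emptyset$, then either $W_{m}=\emptyset$, in which case both sides are $i$ by the third branch, or $W_{m}=\{m\}$, in which case $\tau^{s}_{m}(i)=m$ while $\tau^{s}_{m+1}(i)=i\geq m+1$ by the third branch.

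There is no substantive obstacle: the argument is pure case bookkeeping driven by monotonicity of $\max$ under inclusion of index sets. The only point requiring a moment of care is the boundary value $n=m$ in part \eqref{tau3}, where both the second and third branches can be invoked but both give the same output $m$, so the definition is consistent and the monotonicity step $\tau^{s}_{m}(m-1)=m-1\leq m=\tau^{s}_{m}(m)$ goes through cleanly.
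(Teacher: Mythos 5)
Your proof is correct and follows essentially the same route as the paper: a direct case split on the branches of the definition of $\tau^{s}_{m}$, using that the maximum can only grow when the index set $[m,n]$ is enlarged, and handling separately the situations where the witness set is empty or reduces to $\{m\}$. Your set-theoretic phrasing via $W_{m}$ and $W_{m+1}=W_{m}\setminus\{m\}$ in part (ii) is just a repackaging of the paper's case distinction, so there is nothing substantively different to report.
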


\begin{proof}
\eqref{tau3}. If $i+1\leq m$, then $\tau^{s}_{m}(i) \leq i+1 =\tau^{s}_{m}(i+1)$. Assume that $i+1>m$. Then $i \geq m$. If $\exists j \in [m,i](s_j <s_{j+1})$, then clearly $\exists j \in [m,i+1](s_j <s_{j+1})$ and 
\begin{equation*}
\tau^{s}_{m}(i)= \max \{ j \in [m,i]: s_j <s_{j+1}\} \leq  \max \{ j \in [m,i+1]: s_j <s_{j+1}\}= \tau^{s}_{m}(i+1).
\end{equation*}
On the other hand, if $\forall j \in [m,i](s_{j+1}\leq s_j)$ then $\tau^{s}_{m}(i)=i \leq i+1 =\tau^{s}_{m}(i+1)$.

\eqref{tau4}. If $i \leq m$, then $\tau^{s}_{m}(i)=i=\tau^{s}_{m+1}(i)$. If $i = m+1$, then $\tau^{s}_{m}(m+1) \leq m+1 = \tau^{s}_{m+1}(m+1)$. If $i> m+1$ and $\forall j \in [m,i](s_{j+1}\leq s_{j})$, then clearly also $\forall j \in [m+1,i](s_{j+1}\leq s_{j})$ and $\tau^{s}_{m}(i)=i=\tau^{s}_{m+1}(i)$. If $i> m+1$ and $\exists j \in [m,i](s_j< s_{j+1})$, we have that either 
\begin{equation}\label{condtodo}
\forall j \in [m+1,i](s_{j+1}\leq s_{j})
\end{equation}
or
\begin{equation}\label{condexiste}
\exists j \in [m+1,i](s_j< s_{j+1}).
\end{equation}
If \eqref{condtodo} holds, then we must have $s_m< s_{m+1}$ and hence  $\tau^{s}_{m}(i)=m < i = \tau^{s}_{m+1}(i)$. If \eqref{condexiste} holds, then 
\begin{equation*}
\tau^{s}_{m}(i)= \max \{ j \in [m,i]: s_j <s_{j+1}\} =  \max \{ j \in [m+1,i]: s_j <s_{j+1}\}= \tau^{s}_{m+1}(i). \qedhere
\end{equation*}
\end{proof}

We recall that the original proof relies on \cite[Lemma~3.1]{Mainge}. As it turns out, for our quantitative analysis we do not need a full quantitative version of that result as the following weakening is sufficient.

\begin{lemma}\label{qtlemmaMainge}
Let $s:\N \to \N$ and $m,r \in \N$ be arbitrary. If $m \geq r$ and $s_{m}<s_{m+1}$, then
\begin{equation*}
 \forall i \geq m\left(\tau^{s}_{m}(i) \geq r \wedge \max\{s_{\tau^{s}_{m}(i)},s_i\}\leq s_{\tau^{s}_{m}(i)+1} \right) .
\end{equation*}
\end{lemma}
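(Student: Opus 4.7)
The plan is to unpack the three-way case split in the definition of $\tau^s_m$ and observe that the hypothesis forces us into the ``middle'' case uniformly for all $i\geq m$. Specifically, define $A_i := \{k\in[m,i]:s_k<s_{k+1}\}$. Since $s_m<s_{m+1}$, the index $m$ itself lies in $A_i$ for every $i\geq m$, so $A_i$ is nonempty. Therefore the definition of $\tau^s_m$ gives $\tau^s_m(i)=\max A_i$, and in particular $\tau^s_m(i)\geq m\geq r$. This immediately disposes of the first conjunct.

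For the second conjunct, set $T:=\tau^s_m(i)$. Since $T\in A_i$ we have $s_T<s_{T+1}$, and in particular $s_T\leq s_{T+1}$, which handles half of the $\max$. It then remains to show $s_i\leq s_{T+1}$. My plan here is a short downward chain of inequalities based on the maximality of $T$: if $i>T+1$, then every $k\in[T+1,i-1]$ lies in $[m,i]$ but not in $A_i$ (by maximality of $T$), so $s_{k+1}\leq s_k$ for each such $k$. Telescoping yields
\[
s_i\;\leq\;s_{i-1}\;\leq\;\cdots\;\leq\;s_{T+2}\;\leq\;s_{T+1}.
\]
The edge cases $i=T$ and $i=T+1$ are trivial (in the former, $s_i=s_T<s_{T+1}$; in the latter, $s_i=s_{T+1}$).

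I do not expect any genuine difficulty here; the argument is really just bookkeeping once one makes the structural observation that $s_m<s_{m+1}$ guarantees $A_i\neq\emptyset$. The only point requiring mild care is the edge cases (especially $i=T$, where one should not inadvertently invoke the empty chain $[T+1,i-1]$ as if it produced a bound). Note that, unlike the original Lemma of Maingé cited in the remark, we make no effort to show monotonicity of $i\mapsto\tau^s_m(i)$ beyond what Proposition~\ref{proptau} already provides, nor do we need any divergence statement; the present weaker formulation suffices for the quantitative analysis of the second case because the index $m$ (with $m\geq r$ and $s_m<s_{m+1}$) can simply be supplied externally from the failure of eventual monotonicity of $(s_n^z)$.
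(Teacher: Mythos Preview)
Your proof is correct and follows essentially the same approach as the paper: both observe that $s_m<s_{m+1}$ forces $\tau^s_m(i)=\max\{k\in[m,i]:s_k<s_{k+1}\}$ for all $i\geq m$, then handle the same three-way split $T=i$, $T=i-1$, $T<i-1$ via the telescoping chain. The only cosmetic difference is that the paper obtains $\tau^s_m(i)\geq m$ by invoking the monotonicity of $\tau^s_m$ from Proposition~\ref{proptau} (namely $\tau^s_m(i)\geq\tau^s_m(m)=m$), whereas you get it directly from $m\in A_i$; both are equally valid and equally trivial.
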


\begin{proof}
We show first that
\begin{equation}\label{tau5}
s_{m}<s_{m+1} \to \forall i \geq m\left(\max\{s_{\tau^{s}_{m}(i)},s_i\}\leq s_{\tau^{s}_{m}(i)+1} \right)
\end{equation}
Assume that $s_{m}<s_{m+1}$ and take arbitrary $i \geq m$. Then $\tau^{s}_{m}(i)=\max \{j \in [m,i]:  s_{j}<s_{j+1}\}$ and, in particular, $s_{\tau^{s}_{m}(i)} \leq s_{\tau^{s}_{m}(i)+1}$. On the other hand, consider the following three cases: (i) $\tau^{s}_{m}(i)=i$, (ii) $\tau^{s}_{m}(i)=i-1$ and (iii) $\tau^{s}_{m}(i)< i-1$. (i) From $s_{\tau^{s}_{m}(i)}\leq s_{\tau^{s}_{m}(i)+1}$, we get $s_i \leq s_{\tau^{s}_{m}(i)+1}$. (ii) This case reduces to $s_i \leq s_i$ which is trivially true. (iii) Note that for $j \in [\tau^{s}_{m}(i)+1, i-1]$ we must have $s_{j+1} \leq s_{j}$. Hence  $s_i \leq s_{i-1}\leq \dots \leq s_{\tau^{z}_{m}(i)+1}$. This finishes the proof of \eqref{tau5}.

Assume that $m \geq r $. Then by Part~\eqref{tau3} of Proposition~\ref{proptau}, for $i \geq m$, it follows that $\tau^{s}_{m}(i) \geq \tau^{s}_{m}(m)=m \geq r$, which concludes the proof.
\end{proof}

In the following, since we are going to use sequences $(s^z_n)$ that involve a parameter $z \in H$, we simplify the notation writting $\tau^{z}_{m}$ instead of $\tau^{s^z}_{m}$.

\begin{lemma}\label{lemmamaincase2}
Let $D \in \N \setminus \{ 0 \}$ be such that $D\geq \max\{2 \norm{u-p},\norm{z_0-p} \}$, for some $p \in S$. For $k,m,n \in \N$, $f:\N \to \N$  monotone and $z \in B_{D}$, assume that 
\begin{enumerate} [$(i)$]
\item\label{quantcase2} $n \geq r_3(m) \wedge s_n^{z} <s_{n+1}^{z}$,
\item\label{IneqJz-z} $\norm{J(z)-z}\leq \frac{1}{\xi(n)+1}$,
\item\label{hypprodint}
$\forall y \in B_{D} \left(\norm{J(y)-y}\leq \frac{1}{m+1} \to \langle u-z,y-z \rangle \leq \frac{1}{32(k+1)^2} \right).$
\end{enumerate}
Then $$\forall i \in [n,f(n)]\left(s_i^z \leq \frac{1}{k+1} \right),$$
where $\zeta$, $r_2$ and $r_3$ as in Definition~\ref{definitionfunctions1} and $\xi$ is as in Definition~\ref{definitionfunctions2}. 
\end{lemma}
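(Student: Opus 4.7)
My plan is to apply Lemma~\ref{qtlemmaMainge} with $r=m=n$ to hypothesis (i), which yields for every $i\geq n$ that $\tau:=\tau^{z}_{n}(i)$ satisfies $\tau\geq n\geq r_3(m)$, $s^z_\tau\leq s^z_{\tau+1}$, and $s^z_i\leq s^z_{\tau+1}$. Since $\tau\leq i\leq f(n)$ for $i\in[n,f(n)]$, it will suffice to prove $s^z_{\tau+1}\leq \tfrac{1}{k+1}$. Applying \eqref{Desigmain} at index $\tau$ and using $s^z_\tau\leq s^z_{\tau+1}$ to absorb the $(1-\lambda_\tau)s^z_\tau$ term leaves
\[
s^z_{\tau+1}\leq \frac{P^z_\tau}{\lambda_\tau} + 2\langle u-z,y_{\tau+1}-z\rangle,
\]
so I would aim to show each summand is at most $\tfrac{1}{2(k+1)}$.

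For the first summand, hypothesis (ii) combined with Lemma~\ref{samefix-pt-set} and the definition of $\xi$ furnishes two bounds on $\norm{J_\tau(z)-z}$, one from each entry of the $\max$ inside $\xi$. Using $\norm{y_\tau-z}\leq 2D$, the estimate $P^z_\tau\leq 2(6D+1)\norm{J_\tau(z)-z}$ holds, and combining the first of the two bounds with $\lambda_\tau\geq 1/h(f(n))$ (by $(Q_{\ref{Q0}})$ and the monotonicity of $h$, since $\tau\leq f(n)$) delivers $P^z_\tau/\lambda_\tau\leq \tfrac{1}{8(k+1)^2}\leq \tfrac{1}{2(k+1)}$.

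For the inner-product summand I would apply hypothesis (iii) to $y_\tau\in B_D$ (membership provided by Lemma~\ref{aux_bounds}) and then transfer to $y_{\tau+1}$. Inequality \eqref{desigJnyn-yn} together with $s^z_\tau\leq s^z_{\tau+1}$ gives $\norm{J_\tau(y_\tau)-y_\tau}^2\leq c(8D^2\lambda_\tau + P^z_\tau)$, and the various components inside the $\max$'s defining $r_3(m)$ (for $\lambda_\tau$) and the second entry in $\xi$ (for $P^z_\tau$) are calibrated so that, via Lemma~\ref{lemmaresolvineq}, one obtains $\norm{J(y_\tau)-y_\tau}\leq \tfrac{1}{m+1}$. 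Hypothesis (iii) then yields $\langle u-z,y_\tau-z\rangle\leq \tfrac{1}{32(k+1)^2}$. For the transfer, the identity $y_{\tau+1}-y_\tau = \lambda_\tau(u-y_\tau)+\delta_\tau(J_\tau(y_\tau)-y_\tau)$ gives $\norm{y_{\tau+1}-y_\tau}\leq 2D\lambda_\tau + \norm{J_\tau(y_\tau)-y_\tau}$, and the components $256D^2(k+1)^2-1$ of $r_3$ and $128D(k+1)^2$ of $r_2(n)$ make $2D\norm{y_{\tau+1}-y_\tau}$ small enough that $2\langle u-z,y_{\tau+1}-z\rangle\leq 2\langle u-z,y_\tau-z\rangle + 4D\norm{y_{\tau+1}-y_\tau}\leq \tfrac{1}{2(k+1)}$.

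The main technical obstacle I expect is the careful bookkeeping: each entry of the $\max$-expressions defining $r_2$, $r_3$ and $\xi$ is tuned to exactly one estimate in the chain above, and one has to verify that no constants escape (particularly the factors $D$, $c$ and $6D+1$) when the five separate numerical bounds are combined. Putting the two summand bounds together gives $s^z_{\tau+1}\leq \tfrac{1}{k+1}$, and the conclusion for $s^z_i$ follows from $s^z_i\leq s^z_{\tau+1}$.
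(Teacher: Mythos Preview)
Your proposal is correct and follows essentially the same route as the paper: apply Lemma~\ref{qtlemmaMainge}, then for each $i\in[n,f(n)]$ use $s^z_{\tau}\leq s^z_{\tau+1}$ in \eqref{desigJnyn-yn} to bound $\norm{J_\tau(y_\tau)-y_\tau}$, invoke hypothesis~\eqref{hypprodint} at $y_\tau$, transfer the inner product to $y_{\tau+1}$ via the recursion, and combine with the $P^z_\tau/\lambda_\tau$ estimate coming from hypothesis~\eqref{IneqJz-z}. All intermediate numerical estimates you list match the paper's exactly.

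The only genuine difference is in the very last step. The paper first bounds $s^z_\tau$ via \eqref{desig10}, obtaining $s^z_\tau\leq \tfrac{1}{4(k+1)^2}$, and then passes to $s^z_{\tau+1}$ through the square-root triangle inequality $\sqrt{s^z_{\tau+1}}\leq \sqrt{s^z_\tau}+\norm{y_\tau-y_{\tau+1}}$. Your route is slightly more direct: you substitute $s^z_\tau\leq s^z_{\tau+1}$ into \eqref{Desigmain} and solve for $s^z_{\tau+1}$ immediately, which avoids the square-root detour and yields the same numerical conclusion. Both arguments use the identical ingredients (the $h(f(n))$ entry of $\xi$ for $P^z_\tau/\lambda_\tau$, the $r_2$ and $r_3$ entries for $\norm{J_\tau(y_\tau)-y_\tau}$ and $\lambda_\tau$); your packaging is marginally cleaner but not materially different.
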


\begin{proof}
In this proof we omit the parameter $z$, whenever possible, and write $s_{(\cdot)}$, $\tau_n(\cdot)$ and $P_{(\cdot)}$ instead of $s_{(\cdot)}^{z}$, $\tau_n^{z}(\cdot)$ and $P_{(\cdot)}^{z}$, respectively. 

By Lemma~\ref{qtlemmaMainge} we have that 
\begin{equation*}
\forall i \geq n \left( \tau_{n}(i) \geq r_3(m) \wedge  \max\{s_{\tau_{n}(i)},s_i\}\leq s_{\tau_{n}(i)+1} \right).
\end{equation*}

Let $i \in [n,f(n)]$. Since $s_{\tau_{n}(i)} \leq s_{\tau_{n}(i)+1}$, by inequality \eqref{desigJnyn-yn} we have that
\begin{equation}\label{eq10caso2}
\norm{J_{\tau_{n}(i)}(y_{\tau_{n}(i)})-y_{\tau_{n}(i)}}^2 \leq 8cD^2\lambda_{\tau_{n}(i)}+cP_{\tau_{n}(i)}.
\end{equation} 
By the monotonicity of $\ell$ and the definition of $r_3(m)$ we have that $\tau_{n}(i) \geq \ell\left(16c(r_2(m))^2D^2\right)$. Hence, by $(Q_{\ref{Q1}})$ 
 \begin{equation}\label{Ineqlambda}
8cD^2\lambda_{\tau_{n}(i)}\leq \frac{8cD^2}{16c(r_2(m))^2D^2}=\frac{1}{2(r_2(m))^2}.
\end{equation} 
By \eqref{IneqJz-z}, the monotonicity of $\zeta$ and the definition of $\xi$ we have that $\norm{J(z)-z}\leq \frac{1}{\zeta\left(4c(r_2(m))^2(6D+1),f(n)\right)+1}$. Since $\tau_{n}(i)\leq i \leq f(n)$, by Lemma~\ref{samefix-pt-set}  we have
\begin{equation*}
\norm{J_{\tau_{n}(i)}(z)-z}\leq\frac{1}{4c(r_2(m))^2(6D+1)}\, (\leq 1).
\end{equation*}
Then, since $\norm{y_{\tau_{n}(i)}-z}\leq 2D$, 
\begin{equation}\label{IneqP}
P_{\tau_{n}(i)}=2\norm{J_{\tau_{n}(i)}(z)-z}\left(3\norm{y_{\tau_{n}(i)}-z}+\norm{J_{\tau_{n}(i)}(z)-z}\right)\leq \frac{2(6D+1)}{4c(r_2(m))^2(6D+1)}=\frac{1}{2c(r_2(m))^2}.
\end{equation}
Combining $\eqref{eq10caso2} - \eqref{IneqP}$ we conclude that
\begin{equation*}
\norm{J_{\tau_{n}(i)}(y_{\tau_{n}(i)})-y_{\tau_{n}(i)}}^2 \leq \frac{1}{2(r_2(m))^2}+c\frac{1}{2c(r_2(m))^2}=\frac{1}{(r_2(m))^2}.
\end{equation*}
By the definition of $r_2(m)$ we conclude that
\begin{equation}\label{ineqJtau}
\norm{J_{\tau_{n}(i)}(y_{\tau_{n}(i)})-y_{\tau_{n}(i)}} \leq \dfrac{1}{128D(k+1)^2}.
\end{equation}
 Moreover, the definition of $r_2(m)$ and Lemma~\ref{lemmaresolvineq} entail that $\norm{J(y_{\tau_{n}(i)})-y_{\tau_{n}(i)}}\leq \frac{1}{m+1}.$

We show that 
\begin{equation}\label{ineqQ}
P_{\tau_{n}(i)} \leq \frac{1}{8h(i)(k+1)^2}.
\end{equation}
Indeed, by the definition of $\xi$ we have $\norm{J(z)-z}\leq \frac{1}{\zeta(16h(f(n))(k+1)^2(6D+1),f(n))+1}$. Then,  by Lemma~\ref{samefix-pt-set}, for $n'\leq f(n) $ $$\norm{J_{n'}(z)-z}\leq \frac{1}{16h(f(n))(k+1)^2(6D+1)}.$$
Since $h$ is monotone and $i\leq f(n)$ we have $\tau_{n}(i)\leq f(n)$ and $h(i)\leq h(f(n))$. Then $$\norm{J_{\tau_{n}(i)}(z)-z}\leq \frac{1}{16h(i)(k+1)^2(6D+1)}(\leq 1).$$
Hence
\begin{equation*}
P_{\tau_{n}(i)} \leq \frac{2(6D+1)}{16h(i)(k+1)^2(6D+1)}=\frac{1}{8h(i)(k+1)^2}.
\end{equation*}

By $(Q_{\ref{Q1}})$ and the the fact that $\tau_{n}(i) \geq r_3(m)$ we have that $\lambda_{\tau_{n}(i)} \leq \frac{1}{256D^2(k+1)^2}.$ Then, the definition of $\eqref{exactPPA}$ and the fact that $\norm{y_{\tau_{n}(i)}-u}\leq 2D$ entail
\begin{equation}\label{designormytau}
\begin{split}
\norm{y_{\tau_{n}(i)}-y_{\tau_{n}(i)+1}} &=\norm{\lambda_{\tau_{n}(i)}(y_{\tau_{n}(i)}-u)+\delta_{\tau_{n}(i)}\left(y_{\tau_{n}(i)}-J_{\tau_{n}(i)}(y_{\tau_{n}(i)}\right)}\\
&\leq \lambda_{\tau_{n}(i)}\norm{y_{\tau_{n}(i)}-u}+ \delta_{\tau_{n}(i)} \norm{J_{\tau_{n}(i)}(y_{\tau_{n}(i)})-y_{\tau_{n}(i)}} \\
& \leq 2D\lambda_{\tau_{n}(i)} +\norm{J_{\tau_{n}(i)}(y_{\tau_{n}(i)})-y_{\tau_{n}(i)}}.
\end{split}
\end{equation}
Hence, using \eqref{ineqJtau}, we derive that $\norm{y_{\tau_{n}(i)}-y_{\tau_{n}(i)+1}} \leq \frac{2D}{256D^2(k+1)^2}+\frac{1}{128D(k+1)^2} =\frac{1}{64D(k+1)^2},$ which trivialy implies that
\begin{equation}\label{ineqprinc2}
\norm{y_{\tau_{n}(i)}-y_{\tau_{n}(i)+1}} \leq \frac{1}{2(k+1)}.
\end{equation}

Since $y_{\tau_{n}(i)} \in B_D$ and $\norm{J(y_{\tau_{n}(i)})-y_{\tau_{n}(i)}} \leq \frac{1}{m+1}$, from \eqref{hypprodint} we obtain
\begin{equation}\label{desigprodint}
\langle u-z,y_{\tau_{n}(i)}-z \rangle \leq \frac{1}{32(k+1)^2}.
\end{equation}

We have that $\norm{u-z}\leq 2D$. Then, using \eqref{desigprodint}
\begin{equation*}
\begin{split}
\langle u-z, y_{\tau_{n}(i)+1}-z\rangle &= \langle u-z, y_{\tau_{n}(i)+1}-y_{\tau_{n}(i)}\rangle+\langle u-z, y_{\tau_{n}(i)}-z\rangle \\
& \leq \norm{u-z} \norm{y_{\tau_{n}(i)+1}-y_{\tau_{n}(i)}}+\langle u-z, y_{\tau_{n}(i)}-z\rangle \\
& \leq \frac{1}{16(k+1)^2}.
\end{split}
\end{equation*}

By \eqref{desig10}, using \eqref{ineqQ}, condition $(Q_1)$ and the fact that $h\left(\tau_{n}(i)\right)\leq h(i)$, we derive 
\begin{equation}\label{desigstn}
\begin{split}
s_{\tau_{n}(i)} &\leq 2 \langle u-z,y_{\tau_{n}(i)+1}-z \rangle +h\left(\tau_{n}(i)\right)\left(s_{\tau_{n}(i)}-s_{\tau_{n}(i)+1}+P_{\tau_{n}(i)}\right)\\
&\leq 2 \langle u-z,y_{\tau_{n}(i)+1}-z \rangle +h\left(\tau_{n}(i)\right)P_{\tau_{n}(i)}\\
&\leq \frac{1}{4(k+1)^2}.
\end{split}
\end{equation}

Observe that
\begin{equation}\label{desigsqrt}
\begin{split}
\sqrt{s_{\tau_{n}(i)+1}} &= \norm{y_{\tau_{n}(i)}-z-\left(y_{\tau_{n}(i)}-y_{\tau_{n}(i)+1}\right)}\\
&\leq \sqrt{s_{\tau_{n}(i)}}+ \norm{y_{\tau_{n}(i)}-y_{\tau_{n}(i)+1}}.
\end{split}
\end{equation}

Then, by \eqref{ineqprinc2}, \eqref{desigstn}  and \eqref{desigsqrt} we have

\begin{equation*}
\sqrt{s_{\tau_{n}(i)+1}} \leq \sqrt{\frac{1}{4(k+1)^2}}+\frac{1}{2(k+1)}=\frac{1}{k+1}.
\end{equation*}
Hence $s_{\tau_{n}(i)+1} \leq \frac{1}{(k+1)^2} \leq \frac{1}{k+1}$, which entails the result.
\end{proof}

\subsection{Putting it together}
We are now able to prove our main result.

\begin{proof}[Proof of Theorem~\ref{theoremwangcui}]
By Proposition~\ref{lemmaprojectarg} there exist $m_0 \leq \beta(\overline{k},\Xi)$ and $z \in B_{D}$ such that 
\begin{equation*}
\norm{J(z)-z}\leq \frac{1}{\Xi(m_0)+1}
\end{equation*}
and 
\begin{equation*}
\forall y \in B_{D} \left(\norm{J(y)-y}\leq \frac{1}{m_0+1} \to \langle u-z,y-z\rangle \leq \frac{1}{32(k+1)^2} \right). 
\end{equation*}

Consider $r_1$ and  $r_3$ to be, respectively, the natural numbers $r_1(m_0)$ and $r_3(m_0)$. 
Observe that  by monotonicity (cf. Remark~\ref{remrakmonotone}),  $r_1\leq \bar{r}$ and $r_3\leq \bar{n}$.

We may assume that $\widetilde{f}(r_3)\geq r_3$. Indeed, if $f(\overline{\sigma}(k,r_3))<r_3$, then $f(r_3)<r_3$ by monotonicity and the fact that $\overline{\sigma}(k,r_3)\geq r_3$ (cf. Remark~\ref{remarksigma}). Notice that by the definition of $\phi_2$ we obtain $r_3 \leq \overline{n}\leq \phi_2(\overline{r}, \overline{n}, \widetilde{f}+2)$. Again by monotonicity and the fact that $\overline{\sigma}(k,r_3)\geq r_3$ we would then have that $r_3 \leq \mu(k,f)$ and the result would be trivially true. 
The condition $\widetilde{f}(r_3)\geq r_3$ implies that $\phi_2(r_1,r_3,\widetilde{f}+2) \leq \phi_2(\overline{r},\overline{n},\widetilde{f}+2)$ and consequently $\phi_1(r_1,r_3,\widetilde{f}+2) \leq \phi_1(\overline{r},\overline{n},\widetilde{f}+2)$. 

 If
\begin{equation*}
\forall i \in [r_3, \phi_1(r_1,r_3,\widetilde{f}+2)] \left(s_{i+1}^{z}<s_{i}^{z}\right),
\end{equation*}
then by Lemma~\ref{maincase1}, there is $n\leq \overline{\sigma}(k, \phi_2(r_1,r_3,\widetilde{f}+2)) \leq \mu(k,f)$ such that
\begin{equation*}
\forall i \in [n,f(n)] \left(s_{i}^{z}\leq \frac{1}{k+1} \right).
\end{equation*}
On the other hand, if $s_{n}^{z}\leq s_{n+1}^{z}$ for some $n \in [r_3, \phi_1(r_1,r_3,\widetilde{f}+2)]=[r_3, \Phi(m_0)]$, we have 
\begin{equation*}
\norm{J(z)-z}\leq \frac{1}{\Xi(m_0)+1}\leq \frac{1}{\xi(\Phi (m_0))+1}\leq \frac{1}{\xi(n)+1}.
\end{equation*}
By Lemma~\ref{lemmamaincase2}, we conclude that there is $n\leq \Phi(m_0)\leq \mu(k,f)$ such that
\begin{equation*}
\forall i \in [n,f(n)] \left(s_{i}^{z}\leq \frac{1}{k+1} \right). \qedhere
\end{equation*}
\end{proof}

\section{Final remarks}\label{sectionremarks}

We observe that conditions $(Q_1) - (Q_{4})$ together with either condition $(Q_{5a})$ or $(Q_{5b})$ allow the sequence $(\gamma_n)$ to be identically equal to zero and so, by taking that choice for $(\gamma_n)$, the iteration \eqref{PPA} reduces to \eqref{HPPA}. In that case, condition $(Q_4)$ can be written as $\forall n \in \N \left(\min\{c_n, (1-\lambda_n)^2\} \geq \frac{1}{c}\right)$ and a quantitative version holds with the same bounds. In fact, that quantitative version is a generalization of previous analyses \cite{LLPP(ta),PP(ta)}, as Theorem~\ref{ThmWangCui} has weaker conditions than those of \cite[Theorem~5.1]{X(02)} and \cite[Theorem~2]{BM(11)}. However,  the analyses in \cite{LLPP(ta),PP(ta)} are still of interest as the bounds obtained there are much simpler than the ones obtained in this paper.  

Under the quantitative conditions  $(Q_1) - (Q_{4})$ together with either $(Q_{5a})$ or $(Q_{5b})$, in corollaries \ref{cor_metayn} and \ref{cor_metazn} we gave explicit bounds on the metastability of the iterations \eqref{PPA} and \eqref{exactPPA}, and in corollaries \ref{cor_metaJi} and \ref{cor_metaJiz} we computed a bound on (the metastable version of) the asymptotic regularity of these iterations. Let us argue that these results provide a quantitative version of Theorem~\ref{ThmWangCui}. By Corollary~\ref{cor_metayn} it follows (ineffectively) that \eqref{exactPPA} is a Cauchy sequence. Hence it converges strongly to a point $\widetilde{y} \in H$. By Corollary~\ref{cor_metaJi} and the continuity of the resolvent functions it follows that $\widetilde{y}$ must be a fixed point, and therefore a zero of the operator $\mathsf{A}$. Furthermore, one can argue that $\widetilde{y}$ must be the projection point of $u$ onto $S$. Indeed, consider the sequence $\left(s_n^{\widetilde{z}}\right)$ with $\widetilde{z}$ a projection point. One can argue, as in Lemmas~\ref{maincase1} and \ref{lemmamaincase2}, to conclude that Theorem~\ref{theoremwangcui} holds with $z=\widetilde{z}$ for every $k$ and $f$. Notice that one cannot guarantee the third assumption in neither of those lemmas. However, those conditions are only required to show equations \eqref{eqinnerproduct} and \eqref{desigprodint}, respectively, which follow from the fact that $\langle u- \widetilde{z}, \widetilde{y}-\widetilde{z} \,\rangle \leq 0$ and the fact that $\widetilde{y}= \lim y_n$. Since Theorem~\ref{theoremwangcui} is always true with $z=\widetilde{z}$, we conclude that $\widetilde{y}$ must be the projection point. By Lemma~\ref{lemma_zn-yn_small} one concludes that the iteration $\eqref{PPA}$ must also converge strongly to a zero of the operator, namely the projection point.

\section*{Acknowledgements}
Both authors acknowledge the support of FCT - Funda\c{c}\~ao para a Ci\^{e}ncia e Tecnologia under the project: UID/MAT/04561/2019 and the research center Centro de Matem\'{a}tica, Aplica\c{c}\~{o}es Fundamentais e Investiga\c{c}\~{a}o Operacional, Universidade de Lisboa. 
The second author also acknowledges the support of the `Future Talents' short-term scholarship at Technische Universit{\"a}t Darmstadt.

The paper also benefited from discussions with Fernando Ferreira and Ulrich Kohlenbach.

\bibliography{References}{}
\bibliographystyle{plain}

\end{document}